\documentclass{amsart}
\usepackage{amssymb}
\usepackage{latexsym}
\usepackage{amsmath}
\usepackage{enumerate}
\usepackage{amsthm}
\usepackage{pifont}
\usepackage{tikz}
\usepackage[f]{esvect}
 \usepackage{dutchcal}
  \usepackage{hyperref}

\newcommand{\qee} {\hspace*{2mm}\hfill \ding{109}}

\renewcommand{\iff}{\leftrightarrow}
\renewcommand{\leq}{\leqslant}
\renewcommand{\geq}{\geqslant}
\renewcommand{\preceq}{\preccurlyeq}

\renewcommand{\phi}{\varphi}

\renewcommand{\Theta}{\varTheta}
\renewcommand{\Phi}{\varPhi}
\renewcommand{\Psi}{\varPsi}
\renewcommand{\Xi}{\varXi}
\renewcommand{\Omega}{\varOmega}
\renewcommand{\Gamma}{\varGamma}

\newtheorem{theorem}{Theorem}[section]
\newtheorem{define}[theorem]{Definition}

\newtheorem{exa}[theorem]{Example}

\newtheorem{exerc}[theorem]{Exercise}

\newtheorem{conj}[theorem]{Conjecture}

\newtheorem{ques}[theorem]{Open Question}

\newtheorem{lem}[theorem]{Lemma}

\newtheorem{cor}[theorem]{Corollary}

\newtheorem{rem}[theorem]{Remark}

\DeclareMathOperator{\necessary}{\text{\tikz[scale=.6ex/1cm,baseline=-.6ex,line width=.1ex]{
                            \draw (-1,-1) rectangle (1,1);}}}

\newcommand{\mc}[1]{\mathcal {#1}}
\newcommand{\mf}[1]{{\mathfrak {#1}}}

\newcommand{\To}{\Rightarrow}

\newcommand{\bleq}{\mathbin{\leq}}
\newcommand{\bles}{\mathbin{<}}
\newcommand{\gnum}[1]{{\ulcorner #1 \urcorner}}
\newcommand{\opr}{\necessary}

\newcommand{\sspl}[4]{\big(\begin{smallmatrix}
  #1 & #2\\
  #3 & #4
\end{smallmatrix}\big)}

\usepackage{upgreek}

\newcommand{\bs}{{\sf BeSh}}
\newcommand{\jer}{Je{\v{r}}{\'{a}}bek}
\newcommand{\grullet}{\text{\textcolor{gray}{$\bullet$}}}

\title{On a Theorem by Bezboruah and Shepherdson}

\author{Albert Visser}
 \address{Department of Philosophy and Religious Studies,
                Utrecht University,
               Janskerkhof 13,
               3512 BL~~Utrecht, The Netherlands}
\email{a.visser@uu.nl}

\keywords{Bezboruah {\&} Shepherdson, Second Incompleteness Theorem, Markov Coding}

\subjclass[2020]
{03A05, 
03F25, 
03F40  
}

\thanks{I am grateful to Emil \jer\ for careful comments on an earlier version of the paper.
I also thank him for his gracious permission to reproduce his argument concerning the cut
introduced in his paper \cite{jera:sequ12}. See our Appendix~\ref{emilsmurf}.
I am grateful to Mateusz {\L}e{\l}yk for asking a good question. 
I thank  Richard Heck, Joost Joosten and Leon Probst who suggested improvements to the first version of the preprint.
I worked with ChatGPT to develop the proof of Lemma~\ref{goldensmurf}. 
See
Remark~\ref{chatsmurf}.}

\begin{document}

\begin{abstract}
We discuss the incompleteness result proven by Bezboruah and Shepherdson in  \cite{bezb:gode76}.
This result tells us that the weak theory ${\sf PA}^-$ does not prove the consistency of any theory
(under certain assumptions explained in the paper). Kreisel argued that such a result is not meaningful.
We discuss Kreisel's objection and conclude that his argument does not hold water. We compare 
Pudl\'ak's extension of the Second Incompleteness Theorem with the  Bezboruah-Sheperdson Theorem.
Finally, we reprove the Bezboruah-Sheperdson Theorem for a sequence-coding based on an insight of
Nielsen and Markov.
\end{abstract}

\maketitle

\section{Introduction}
Suppose we are thinking about a Great Theorem. A first question might be: what is the scope of this Great Theorem? 
Suppose we prove a case that is somewhat removed
from the originally intended range of applicability of the Great Theorem. One of the vocal leaders of the field argues that our solution for 
such an outlier case cannot possibly be significant. Suppose also our proof is quite different from the original proof. Is the leader right and should
we abort our project? Did we prove the same theorem at all? What to say and what to
do?
\dots   

This is the scenario faced by Amala Bezboruah and John C. Shepherdson around 1976. The Great Theorem is the Second Incompleteness Theorem
and the argumentative leader is Georg Kreisel. Bezboruah and Shepherdson proved that a weak theory, to wit ${\sf PA}^-$, does not prove the consistency
of a very weak theory (in fact a sub-theory of propostional logic). See Theorem~\ref{bezshe} of the present paper. Their result implies that {\sf Q} does not prove its own
consistency.
Bezboruah and Shepherdson choose to publish their result in the paper  \cite{bezb:gode76} in spite of Kreisel's
objections,
but the tone of the introduction is somewhat apologetic, accepting Kreisel's argument.
See our Section~\ref{kreiselsmurf} which quotes the apologetic remark.

Afterwards, the paper has, in my view, been unjustly neglected.
Google Scholar counts 63 citations, which is not many for a paper that came out in 1976. 
Of course, it is hard to judge whether this is, in part, due to the long shadow cast by Kreisel, but it is very well possible.
Kreisel's negative opinion is certainly echoed in more recent publications and in FOM discussions. For example,
   Panu Raatikainen writes in the Stanford Encyclopedia of Philosophy: 
\begin{quote} {\small
    There is a version of the second incompleteness theorem for {\sf Q} (see Bezboruah {\&} Shepherdson 1976) but there has 
     been some debate on whether the relevant statement in {\sf Q} can really be taken to express consistency, {\sf Q} being so weak [\dots].
     \cite{raat:gode25}}
    \end{quote}

In this paper, I discuss the Bezboruah {\&} Shepherdson paper. In Section~\ref{kreiselsmurf}, I present Kreisel's objection. 
I will argue that Kreisel's argument does not really hold water.
In Section~\ref{gopusmurf}, I compare the Bezboruah-Shepherdson result with a modern version of the Second Incompleteness Theorem
due to Pavel Pudl\'ak. The discussion suggests that  the Bezboruah-Shepherdson result and the Second Incompleteness Theorem in a modern
version can be best viewed as entirely different results with some overlapping special cases.
        In Section~\ref{markovsmurf}, I will give a proof of the Bezboruah-Shepherdson result for a different sequence coding based on
        ideas of Nielsen and Markov.
   
   \section{Kreisel}\label{kreiselsmurf}
   Here is the apologetic passage concerning Kreisel's critical remarks in \cite{bezb:gode76}.
    \begin{quote}
   {\small
   Kreisel has argued [\dots] that something like
the Hilbert-Bernays conditions represent a completely satisfactory statement of
G\"odel's second theorem on the grounds that ${\sf Th}(x)$ can hardly be considered to
express the notion `$x$ is the g.n. of a theorem of $U$' if such simple properties of
proofs are not formally provable. [...]
   Thus for such a weak system as Raphael Robinson's {\sf Q} [\dots] which cannot even
prove commutativity of addition, it is apparently [\dots] unknown whether the
usual formulae expressing its consistency are unprovable. We must agree with
Kreisel that this is devoid of any philosophical interest and that in such a weak
system this formula cannot be said to express consistency but only an algebraic
property which in a stronger system (e.g. Peano arithmetic {\sf P}) could reasonably
be said to express the consistency of {\sf Q}. Nevertheless there is a technical challenge
and we respond to it here by showing that, for at least one natural way of expressing
consistency, the consistency of {\sf Q} cannot be proved in {\sf Q}. \cite[pp 503, 504]{bezb:gode76}
   }
   \end{quote}

So, Kreisel claims that, in the context of a theory like {\sf Q}, a usual sentence ${\sf con}({\sf Q})$ fails to express the
consistency of {\sf Q}, since {\sf Q} does not verify desired properties of the
provability predicate. Apparently, for Kreisel, the same sentence ${\sf con}({\sf Q})$  \emph{can} express the consistency of {\sf Q} in a stronger theory that does verify
the Hilbert-Bernays or the L\"ob Conditions. This last tells us at least that Kreisel's point of view does not entail skepticism concerning the
question whether a numerical statement can somehow express a non-numerical fact. Let us simply postulate that, in the
context of true arithmetic, an appropriate formula ${\sf con}({\sf Q})$ does indeed express the consistency of {\sf Q}. 
Then, why can we not view {\sf Q} as a theory with as intended semantics the natural numbers? Where does the demand that
{\sf Q} considered \emph{all in isolation} has to provide the meaning of the formulas come from? Why should meaning vary with theory? Even if we are inferentialists,
questions about what subsystem of our preferred system is needed to prove so-and-so hold some interest.
There is no need to view inferences in the weaker system as having different meanings and, thus, being different inferences. 
The meanings could still be the meanings provided by the
stronger system.\footnote{Suppose we have proven something, e.g. division by 3, in {\sf ZFC} and we ask the question \emph{Can we also prove this without Choice?}
Then, the answer \emph{No, you can't, since in {\sf ZF} the statement would have a different meaning} seems a bit silly.}
Thus, Kreisel's argument does not seem very convincing.

\begin{rem}{\small
With some fantasy, we can give Kreisel's take on the Second Incompleteness Theorem a more radical reading. 
The claim would be that something like the following statement is the pro-version of the Second Incompleteness Theorem. 
\begin{quotation}
Consider any theory $U$ that interprets {\sf R} via a $k$-dimensional $N$ and any reasonable G\"odel numbering $\gamma$.
Suppose a formula $\phi(\vv x)$ with $k$ free variables
is given and suppose that $U$ proves the L\"ob Conditions for $\phi$ w.r.t. $N,\gamma$. Then,
$U$ proves L\"ob's Principle for $\phi$ w.r.t. $N,\gamma$.
\end{quotation}
We note that we need assume nothing further about the theory $U$ in the formulation. It  need not be computably enumerable, etcetera. Moreover,
the constraints on $\gamma$ are light. We only have to
number sentences, since codes for proofs do not enter the formulation. We do need to be able to verify the diagonal lemma, though.

The predicates $\phi(x)$ that satisfy the L\"ob Conditions encompass many predicates, like $x=x$, that
intuitively have nothing to do with provability. 

My 5 cents would be that we want to keep a statement of the Second Incompleteness Theorem that is directed at a predicate that represents intuitively
a provability predicate. We keep the insight about the step from L\"ob Conditions to L\"ob's Principle as a powerful lemma that can be used in 
versions of the proof of the Second Incompleteness Theorem.

We refer the reader to \cite[p 38]{fefe:arit60} for Solomon Feferman's objections to a similar idea.
}\end{rem}

A different approach to Kreisel's objection could be to grant Kreisel the idea that we need good closure conditions for
provability, but claim that L\"ob's Conditions are too strong. One approach would be to consider the generalised
provability conditions as discussed in \cite[pp 173-174]{haje:meta91}.  A second approach is the one discussed in
Appendix~\ref{loru}. There we show that, in the case of ${\sf PA}^-$, we can verify, if we choose our proof system and sequence coding wisely,  the validity of a very decent modal logic
{\sf WfL},
which is not quite L\"ob's Logic.
In {\sf WfL}, we do have the de Jongh-Sambin-Bernardi Theorem about the uniqueness of modalised fixed points. So, the G\"odel sentence is
still unique. However, in {\sf WfL}, we cannot prove the de Jongh-Sambin Theorem about the explicit definability. For example, we 
 cannot show that the G\"odel fixed point is equivalent to the consistency statement. (This does not mean that ${\sf PA}^-$ does not
 prove this equivalence. However, it is very implausible that it does.) 
So, even if we grant Kreisel's argument, there could still disagreement about where to draw the boundary.

If, as I think, Kreisel's argument does not hold water, a general objection against the consideration
of a question like the validity of the Second Incompleteness Theorem for {\sf Q} disappears.   This, of course, does not
automatically imply that the question is sensible or interesting.  So, can we give some positive arguments for its interest?
Of course, as Bezboruah and Shepherdson say, proving the Second Incompleteness Theorem for {\sf Q} is a wonderful technical challenge.
I felt the same for proving my own variation presented in Section~\ref{markovsmurf}. These proofs deliver constructions of discretely ordered
rings with certain special properties. The model constructed by Bezboruah and Shepherdson is recursive and the reason that
it contains an inconsistency proof is completely perspicuous. This has some didactic value since, in the case of the
inconsistency proofs in existing in models of even weak theories like ${\sf S}^1_2$, we cannot really visualise such proofs.
My version in Section~\ref{markovsmurf} illustrates the difference between purely universal sentences and $\Pi^0_1$-sentences
even of very low depth of quantifier changes. Finally, I do think the contrast between G\"odel's Theorem and Bezboruah {\&} Shepherdson's
Theorem has didactic value. I hope this will be illustrated by the discussion in Section~\ref{gopusmurf}.  

    \begin{rem}{\small
    Here is a speculative remark about the question why Kreisel was such a firm believer in the Hilbert-Bernays conditions as necessary condition for
    intensional correctness. The remark is all the more speculative, since I have not seen a text by Kreisel himself directly addressing this issue.
    
    The story of the discovery of L\"ob's Theorem is well-known. In \cite{henk:prob52}, Leon Henkin asked
  whether the G\"odel Fixed Point of ${\sf prov}_{\sf PA}(x)$ is provable or independent. Kreisel answered with a resounding \emph{it depends} in his paper
  \cite{krei:prob53}. He constructed versions of the provability predicate that lead to different answers. In \cite{loeb:solu55}, Martin L\"ob isolated
   natural conditions for a provability predicate that led to the answer that the Henkin Sentence was provable.\footnote{L\"ob was a very independent
   thinker, which, at least partly, explains why he was not discouraged by Kreisel's paper to look into the matter. 
    L\"ob says that Kreisel did not prove his result for a proof-predicate  $\mf B(x,y)$  expressing that $x$ is the number of a formal proof of the formula with
    G\"odel number $y$.
    But what does this mean?
     The beauty of his paper is that, for the purpose of answering Henkin's question, we do not need to worry about the precise answer. 
     His proof reveals the required conditions on such a predicate
    to produce a definite answer. We only need to recognise that our $\mf B$ satisfies these conditions.}
    The L\"ob Conditions are
   closely related to the Hibert-Bernays Conditions. 
   
   See \cite{halb:henk14} for a discussion of the history and
   the ins and outs of Kreisel's examples. In the light of what he viewed as his methodological blunder, Kreisel met his Damascus and converted to a standpoint diametrically opposite to
   the  philosophy of  \cite{krei:prob53}.
   He proposed to make verifiability  in the ambient theory of conditions like the L\"ob Conditions a prerequisite for
   being a sensible provability predicate at all.\footnote{Another consequence of Kreisel's \emph{Damascus} was that he believed that it could be well
   possible that we find conditions to settle the question whether the Rosser fixed point delivers a unique sentence modulo provable equivalence
    (personal conversation). This
   in spite of the resounding \emph{it depends} pronounced by David Guaspari and Robert Solovay in \cite{guas:ross79} and in opposition to
           the subsequent judgement of most relevant specialists.} 
}   \end{rem}

   \section{Bezboruah {\&} Shepherdson meet G\"odel {\&} Pudl\'ak}\label{gopusmurf}
   We are in a different situation than Bezboruah {\&} Shepherdson when they wrote their article.
   There was no adaptation/extension of G\"odel's orginal proof that could reach as low down as {\sf Q}.
  However, there is one now. The decisive step towards that was taken by Pavel Pudl\'ak. Let us first 
  take a closer look at the Second Incompleteness Theorem. 
  
  In the usual presentation, the theorem says that  a theory that contains a sufficient amount of arithmetic
  does not prove its own consistency, unless it is inconsistent. A reasonable further explication of this statement is as follows.
  Let {\sf B} be a given arithmetical base theory. (\$) If a c.e. theory $U$ extends {\sf B} and $U$ proves ${\sf con}(U)$, then it is inconsistent.
  Here ${\sf con}(U)$ is an arithmetical representation of the consistency statement for $U$. We have briefly touched on the
  question what the constraints on the pair $U$, ${\sf con}(U)$ should (not) be in Section~\ref{kreiselsmurf}. In this section, we
  will pretend that it is sufficiently clear what it is to have a suitable formula ${\sf con}(U)$.  We note that (\$)
  is equivalent to: (\#) if a c.e. theory $U$ extends ${\sf B}+{\sf con}(U)$, then $U$ is inconsistent.
  
   We note that (\#) is still not adequate. G\"odel clearly knew that his result worked also for e.g. set theory,
   a theory in a different signature. This means that the notion of extension employed in (\#) is to
   restrictive. A better idea is to replace extension with \emph{being interpretable in}.
   This gives us the  following version: (\textsection) if a c.e. theory $U$ interprets ${\sf B}+{\sf con}(U)$, then $U$ is inconsistent.
   
   The appropriate theory {\sf B} was not yet available in the early days of the Incompleteness Theorems. My favourite choice was
   provided by Sam Buss in \cite{buss:boun86}, to wit his theory for p-time computability ${\sf S}^1_2$.
   The point here is that, in ${\sf S}^1_2$, we can repeat G\"odel's reasoning in a completely natural way. So, here
   is my preferred version of the Second Incompleteness Theorem.
   
   \begin{theorem}[Kurt G\"odel with some help from Sam Buss]\label{gobu}
   Let $U$ be a c.e. theory. If $U$ interprets ${\sf S}^1_2+{\sf con}(U)$, then $U$ is inconsistent.
   \end{theorem}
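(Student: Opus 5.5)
The plan is to reduce the interpretation case to the familiar case of an arithmetical theory proving its own consistency, and then run G\"odel's argument inside ${\sf S}^1_2$.

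\emph{Fixing the data.} Suppose $U$ is c.e. and $K : U \rhd {\sf S}^1_2+{\sf con}(U)$. Since $U$ is c.e., we may, by Craig's trick, replace its axiom set by a p-time (even $\Delta_0$-definable) one without changing the theory. Then ${\sf prov}_U(x)$ is a $\Sigma^b_1$-style $\exists$-formula and ${\sf con}(U)$ is $\neg{\sf prov}_U(\gnum{\bot})$. We work with the formulas ${\sf prov}_U$ and ${\sf con}(U)$ as formulas of arithmetic. Inside $U$ they are read via the translation $K$.

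\emph{Diagonalising inside the interpreted arithmetic.} In ${\sf S}^1_2$ I take the G\"odel fixed point $G$ with ${\sf S}^1_2 \vdash G \iff \neg{\sf prov}_U(\gnum{G^K})$. The key twist is that the fixed point talks about the \emph{translated} sentence $G^K$, since $U$ only speaks via $K$. The map $\phi\mapsto\phi^K$ is p-time, so the diagonal lemma goes through in ${\sf S}^1_2$. Then I verify in ${\sf S}^1_2$ the L\"ob conditions for the predicate $\opr\phi := {\sf prov}_U(\gnum{\phi^K})$.
\begin{enumerate}[(i)]
\item If $U\vdash\phi^K$ then ${\sf S}^1_2\vdash\opr\phi$. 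This holds by $\Sigma_1$-completeness of ${\sf S}^1_2$ for true $\exists\Delta^b_0$ statements.
\item $\opr(\phi\to\psi)\wedge\opr\phi\to\opr\psi$. This holds because translation commutes with connectives and modus ponens is formalised in p-time.
\item $\opr\phi\to\opr\opr\phi$. This is formalised $\Sigma^b_1$-completeness, where we also need that $U$ proves $\mathrm{tr}^K$ of the ${\sf S}^1_2$ axioms used.
\end{enumerate}
Condition (iii) is the main obstacle. It needs Buss's theorem that ${\sf S}^1_2$ proves its own formalised $\Sigma^b_1$-completeness (every true $\Sigma^b_1$ sentence has a p-size proof, formalised with witness lengths bounded polynomially). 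It then needs that step lifted through $K$: ${\sf S}^1_2 \vdash \sigma\to{\sf prov}_U(\gnum{\sigma^K})$ for $\Sigma^b_1$ $\sigma$. For this I would compose Buss's ${\sf S}^1_2$-proof of $\sigma$ with the p-time translation of proofs from ${\sf S}^1_2$ into $U$-proofs of the $K$-translations. Here ${\sf prov}_U(x)$ is not itself $\Sigma^b_1$ but $\exists$-sharply-bounded, which is why we prove the conditions for efficiently bounded proofs. The ingredients are the usual ones for proof-of-$\Sigma^b_1$-completeness, so I would only sketch them.

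\emph{Concluding.} The standard L\"ob derivation in ${\sf S}^1_2$ gives ${\sf S}^1_2\vdash{\sf con}^K_U \to G$, where ${\sf con}^K_U := \neg\opr\bot$. Since $\bot^K=\bot$, this is exactly ${\sf con}(U)$. Translating by $K$ and using $K$'s interpretation of ${\sf S}^1_2+{\sf con}(U)$, we get $U\vdash G^K$. By (i), ${\sf S}^1_2\vdash\opr G$, i.e.\ ${\sf S}^1_2\vdash\neg G$. Translating, $U\vdash\neg G^K$, so $U$ is inconsistent. No use is made of soundness or of $U$ extending arithmetic outright, which is what allows the interpretation formulation.
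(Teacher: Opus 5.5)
Your proposal is correct, and it follows the paper's approach. The paper gives no proof beyond the remark that ``in ${\sf S}^1_2$, we can repeat G\"odel's reasoning in a completely natural way'', and your argument spells that out: the L\"ob conditions for $\opr\phi := {\sf prov}_U(\gnum{\phi^K})$, with formalised $\Sigma^b_1$-completeness composed with the p-time translation of proofs along $K$ as the one substantial step. One small terminological correction: ${\sf prov}_U$ is an unbounded existential over a p-time matrix, not ``$\exists$-sharply-bounded''. You handle it correctly anyway, by applying formalised $\Sigma^b_1$-completeness to the matrix with the proof variable free and doing the existential introduction before translating along $K$.
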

   
     \begin{rem}{\small
   We can strengthen Theorem~\ref{gobu} in various other ways. For example, it would also work for forcing interpretations. Also,
   we can prove variants for non-c.e. theories. See e.g. \cite{viss:look19}. 
   }
   \end{rem}

   Of course, a lot more has to be said about what ${\sf con}(U)$ is but that is beyond the scope of the present paper.
   To connect Theorem~\ref{gobu} with {\sf Q}, we need an insight due to Pavel Pudl\'ak. 
   The basic idea is to interpret ${\sf S}^1_2$ on a definable cut, say, $\mc J$. If {\sf Q} would prove ${\sf con}({\sf Q})$, then
   it would also prove  ${\sf con}({\sf Q})$ in $\mc J$ by the downwards persistence of $\Pi^0_1$-sentences, and, hence, 
   it would interpret ${\sf S}^1_2+{\sf con}({\sf Q})$. This contradicts the
   Second Incompleteness Theorem (our Theorem~\ref{gobu}). In fact, we can derive a direct strengthening of 
    Theorem~\ref{gobu}), to wit Theorem~\ref{pudlak2} below.
   
   \begin{theorem}[Pavel Pudl\'ak]\label{pudlak}
   The theory {\sf Q} interprets ${\sf S}^1_2$ on a definable cut. 
   \end{theorem}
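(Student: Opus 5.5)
The plan is the standard method of shortening cuts, in the style of Solovay, Nelson and Wilkie.

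The first step is to work in ${\sf Q}$ and define a first cut $\mc I_0$ on which addition and multiplication behave well. Take $\mc I_0$ to consist of those $x$ such that for all $y,z$ we have $(y+z)+x=y+(z+x)$. One then checks, using only the recursion axioms for $+$ in ${\sf Q}$, that $\mc I_0$ contains $0$ and is closed under successor. One shrinks it further, by repeated intersections, to a cut $\mc I_1$ that is downward closed under $\leq$ (defined via $\exists z\,(z+x=y)$). On $\mc I_1$, addition is associative, commutative and monotone, and $\mc I_1$ is closed under addition. The same device for multiplication yields $\mc I_2\subseteq \mc I_1$. On $\mc I_2$ we have distributivity and associativity of multiplication, and closure under $+$ and $\cdot$. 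So $\mc I_2$ satisfies the axioms of a discretely ordered semiring, i.e.\ ${\sf PA}^-$ restricted to the cut. The key lemma driving each shortening is the usual one: if $\mc J$ is closed under successor, then $\{x \mid \forall y\in\mc J\,(y+x\in\mc J)\}$ is closed under addition. Similarly, $\{x\mid \forall y\in\mc J\,(y\cdot x\in \mc J)\}$, intersected appropriately, is closed under multiplication.

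The second step handles the smash function $x\# y = 2^{|x|\cdot|y|}$ together with the needed $\Sigma^b_1$-induction. The idea is to use the exponentiation trick. First define, inside $\mc I_2$, a formula for $2^x=y$ through a $\Delta_0$ graph of exponentiation on the small numbers. This requires the usual bit-by-bit $\Delta_0$ definition, together with the verification in the cut that its defining recursive clauses hold. Then let $\mc K=\{x \mid 2^x \text{ exists in } \mc I_2\}$, which one shows is closed under successor. Shortening $\mc K$ to a cut closed under addition gives closure of the outer cut $\{y\mid \exists x\in\mc K\,(y\leq 2^x)\}$ under multiplication of lengths, i.e.\ under $\#$. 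Finally, $\Sigma^b_1$-${\sf PIND}$ (equivalently ${\sf LIND}$) on the final cut $\mc J$ follows by a further shortening. Given a $\Sigma^b_1$-formula $\phi$, the set of $x$ such that ${\sf LIND}$ works up to length $x$ is closed under successor and contains $0$. Since we need uniformity over all formulas and want a single cut, one instead uses the standard fact that ${\sf S}^1_2$ is equivalent over ${\sf PA}^-$ plus $\#$ to the scheme, and relativizes via $\Delta_0$ with a bounded-induction-on-a-cut argument. For Theorem~\ref{pudlak2} we only need that the interpretation is of finitely many axioms in a suitable finitely axiomatised version, and ${\sf S}^1_2$ is finitely axiomatisable. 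It therefore suffices to handle one universal $\Sigma^b_1$ instance, e.g.\ via a $\Sigma^b_1$ satisfaction predicate, and to shorten once for it.

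The main obstacle is this last step: obtaining induction for a genuinely $\Sigma^b_1$ (unbounded-existential-inside-bounded) formula on a single definable cut. Closure of a cut under successor only yields induction for the specific property defining the cut, and downward persistence fails for $\Sigma^b_1$-properties. My first attempt is the trick of letting the cut consist of those $x$ such that every $y\leq x$ satisfies the restricted induction up to $y$. I would combine this with the finite axiomatisation of ${\sf S}^1_2$, so that only one formula has to be treated, relying on the fact that bounded formulas are absolute between a cut and its $\leq$-downward closed extension.
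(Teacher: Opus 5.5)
The paper gives no proof of its own here. It defers to Pudl\'ak, Nelson, H\'ajek--Pudl\'ak, Sterken and the author's note, all of which use Solovay's shortening of cuts. Your three-stage outline follows that route: a cut satisfying the semiring axioms, then a cut closed under $\#$ via a $\Delta_0$ graph of $2^x$, then finitely many induction instances using the finite axiomatisability of ${\sf S}^1_2$. However, two steps fail as you state them.

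First, shortening the exponent cut $\mc K$ to a cut closed under addition only makes $\{y\mid \exists x\in\mc K\; y\le 2^x\}$ closed under multiplication, since $2^x\cdot 2^{x'}=2^{x+x'}$. For $\#$ you need the exponent cut closed under multiplication. If $y\le 2^e$ and $z\le 2^{e'}$, then $|y|\le e+1$ and $|z|\le e'+1$, so $y\# z\le 2^{(e+1)(e'+1)}$. One more shortening repairs this.

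Second, the step you call the main obstacle rests on a misconception and is left unresolved. A $\Sigma^b_1$-formula contains no unbounded quantifiers, only bounded existential and sharply bounded universal ones, so it is not a $\Sigma_1$-formula. Take a cut closed under the operations of the language, with $|\cdot|$, $\lfloor\cdot/2\rfloor$ and $\#$ given by $\Delta_0$ definitions that are total and correct on it. Then every bounded formula with parameters in the cut is absolute between the cut and the whole model. Failure of downward persistence concerns unbounded existential quantifiers only.

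With this, the trick you mention does the job, but it has to be set up with care. Given the current cut $\mc J$ and a bounded $\phi(x,p)$, let $\mc J_\phi$ consist of those $x\in\mc J$ such that for all $p\in\mc J$ and all $y\le x$ we have $\bigl(\phi(0,p)\wedge\forall z<y\,(\phi(z,p)\to\phi(z+1,p))\bigr)\to\phi(y,p)$. The parameter ranges over the \emph{old} cut $\mc J$; this is what avoids circularity. Then $\mc J_\phi$ contains $0$, is closed under successor and is downward closed. Shorten it to a $\mc J'$ closed under $+$, $\cdot$ and $\#$. For $p,x\in\mc J'$, absoluteness turns the hypotheses relativised to $\mc J'$ into the bounded ones above, so $\phi(x,p)$ follows.

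For ${\sf PIND}$, use the hypothesis $\forall z\le y\,(0<z\to(\phi(\lfloor z/2\rfloor,p)\to\phi(z,p)))$ instead; closure under successor then uses $\lfloor (x+1)/2\rfloor\le x$. Doing this once for each of the finitely many ${\sf PIND}$ axioms of a finite axiomatisation of ${\sf S}^1_2$ finishes the proof, and no satisfaction predicate is needed. This trick is already required in your second step. Verifying the recursion clauses of the $\Delta_0$ graph of exponentiation on the cut uses finitely many bounded induction instances, so the trick must be available before you build the exponent cut.
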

   
   Pudl\'ak's result can be found in  \cite{pudl:cuts85}. The construction of the cut uses the technique of
   shortening cuts invented by Robert Solovay. A lot of the needed details were worked out by Edward Nelson in
   \cite{nels:pred86}. The locus classicus to find the detailed argument is the textbook
    \cite[Chapter 5c, pp 368-371]{haje:meta91}. However, it seems to me that the argument given there is
    incomplete. There is an almost gap-free presentation of the argument in Rachel Sterken's masters thesis
    \cite{sterk:conc08}. The corrections to  \cite{pudl:cuts85} on 
    \begin{center}
    \url{https://users.math.cas.cz/~pudlak/hpcorr.pdf}
    \end{center}
    refer the reader to my version of the proof which is quite close to what can be found in Rachel Sterken's masters thesis.
    See 
    \begin{center}
    \url{https://users.math.cas.cz/~pudlak/Q-note2.pdf}.
    \end{center}
    
    Consider any c.e. theory $U$. From Theorem~\ref{pudlak}, we immediately have that
    ${\sf Q}+{\sf con}(U)$ interprets  ${\sf S}^1_2+{\sf con}(U)$, by the downward persistence of $\Pi^0_1$-sentences.
    Thus, if 
     $U$ interprets ${\sf Q}+{\sf con}(U)$, then $U$ interprets  ${\sf S}^1_2+{\sf con}(U)$, and, hence,
   $U$ is inconsistent. Thus, we have:
   
   \begin{theorem}[Pavel Pudl\'ak]\label{pudlak2}
  Let $U$ be a c.e. theory and suppose $U$ interprets ${\sf Q}+{\sf con}(U)$. Then, $U$ is inconsistent.
   \end{theorem}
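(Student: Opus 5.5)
The plan is to reduce Theorem~\ref{pudlak2} to Theorem~\ref{gobu}. The bridge is Pudl\'ak's cut, Theorem~\ref{pudlak}, together with the downward persistence of $\Pi^0_1$-sentences. Let $U$ be c.e. and let $K$ be an interpretation of ${\sf Q}+{\sf con}(U)$ in $U$. It suffices to find an interpretation of ${\sf S}^1_2+{\sf con}(U)$ in ${\sf Q}+{\sf con}(U)$. Since interpretations compose, this gives one of ${\sf S}^1_2+{\sf con}(U)$ in $U$, and Theorem~\ref{gobu} then yields that $U$ is inconsistent.

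First, by Theorem~\ref{pudlak} we fix a formula $\mc J(x)$ that {\sf Q} proves to define a cut, i.e.\ to contain $0$, to be closed under successor and to be downward closed w.r.t.\ $\leq$. It should also be closed under $+$ and $\times$, so that the relativisation $M$ of the arithmetical vocabulary to $\mc J$ is an interpretation of ${\sf S}^1_2$ in {\sf Q}. Since ${\sf Q}+{\sf con}(U)$ extends {\sf Q}, the same $M$ interprets ${\sf S}^1_2$ in ${\sf Q}+{\sf con}(U)$. It remains to see that ${\sf con}(U)^M$, i.e.\ ${\sf con}(U)$ relativised to $\mc J$, is provable in ${\sf Q}+{\sf con}(U)$.

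For this we use that ${\sf con}(U)$ is, as usual, a $\Pi^0_1$-sentence $\forall x\, \neg\,\delta(x)$ with $\delta$ a $\Delta_0$-formula, e.g.\ $\delta(x)$ saying that $x$ codes a $U$-proof of $\bot$. The c.e.-ness of $U$ is what guarantees such a form, possibly after Craig's trick, and we must make sure that the ${\sf con}(U)$ employed in Theorem~\ref{gobu} is of this form. The claim is that {\sf Q} proves $\forall x\in\mc J\,(\delta(x)\iff\delta^{\mc J}(x))$. Granting it, from $\forall x\,\neg\delta(x)$ we get $\forall x\in\mc J\,\neg\delta^{\mc J}(x)$, which is ${\sf con}(U)^M$.

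The main obstacle is this absoluteness claim for $\Delta_0$-formulas between the cut and the ambient {\sf Q}-model. I would prove it by induction on $\delta$, uniformly in the metatheory. Atomic formulas are absolute because $\mc J$ is closed under the operations, so terms evaluate identically inside and outside the cut. The Boolean cases are trivial. For a bounded quantifier $\exists y\leq t\,\psi$ with the parameters in $\mc J$, downward closure of $\mc J$ under $\leq$ gives that every $y\leq t$ already lies in $\mc J$. Hence the relativised and unrelativised bounded quantifiers range over the same elements. Here one must be careful that in {\sf Q} the relation $\leq$ is defined as $\exists z\,(z+y=t)$, and the witness $z$ must also lie in $\mc J$. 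I would therefore choose $\mc J$, as in Pudl\'ak's construction, to be closed downward in the strong sense that $z+y\in\mc J$ implies $z,y\in\mc J$. If needed, I would shrink the cut once more to secure this, using the standard shortening techniques for which Theorem~\ref{pudlak} is our source. With this in place the induction goes through and the theorem follows.
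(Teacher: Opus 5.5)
Your proposal is correct and is essentially the paper's own argument: Theorem~\ref{pudlak} together with the downward persistence of $\Pi^0_1$-sentences yields an interpretation of ${\sf S}^1_2+{\sf con}(U)$ in ${\sf Q}+{\sf con}(U)$. Composing it with the given interpretation in $U$ then contradicts Theorem~\ref{gobu}. The paper only invokes downward persistence, so your $\Delta_0$-absoluteness induction, including the care about the witness $z$ in the definition of $\leq$, fills in a step the paper leaves implicit.
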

 
    If we are just after the result that {\sf Q} does not prove its own consistency, we can avoid the appeal to Theorem~\ref{pudlak} and
    make do with the much easier insight that {\sf Q} interprets ${\sf S}^1_2$. However, this has the disadvantage, that the argument
    does not deliver Theorem~\ref{pudlak2} --- as far as we know.

   When Bezboruah and Shepherdson wrote their paper around 1976, the technology needed for the Pudl\'ak result was not yet
   available. They came up with a very different technique. Their paper is called `G\"odel's Second Incompleteness Theorem for
   {\sf Q}', undoubtedly because this was the original question they started from. However, it is really about the theory ${\sf PA}^-$,
   the theory of the non-negative part of a discretely ordered commutative ring. Also, for  ${\sf PA}^-$, they show something far stronger:
   ${\sf PA}^-$ fails to prove the consistency of an extremely weak theory that I call \bs. 
   It follows that no sub-theory of ${\sf PA}^-$ proves the consistency of \bs. If this sub-theory extends \bs, then,
   \emph{a fortiori} is does not prove its own consistency.
 
   We define the theory  \bs\  as follows.\footnote{\bs\ is just an artificial theory created specifically to give the strongest possible formulation of the Bezboruah-Shepherdson result.} 
   Its language consists of two sentences $\bot$ and $(\bot \to \bot)$.
   There is one axiom, to wit $(\bot \to \bot)$ and one inference rule, Modus Ponens, to wit: from $\bot$ and $(\bot \to \bot)$ one may infer $\bot$.
   We work with a Hilbert-style proof-system. A proof is a sequence of sentences such that an occurrence of a formula is either an axiom or there
   are two strictly earlier occurrences such that the present occurrence follows from them by Modus Ponens.
      The Bezboruah-Shepherdson Theorem, in a version that does their development justice, runs as follows.
   
   \begin{theorem}\label{bezshe}
   Suppose we use the $\upbeta$-function to code sequences and that we use the sequences to code Hilbert-style proofs. Then, the theory ${\sf PA}^-$ does not prove the
   consistency of  \bs. It follows that no sub-theory of ${\sf PA}^-$  proves the consistency of
   any extension of  \bs. So, specifically, {\sf R} does not prove ${\sf con}({\sf R})$ and {\sf Q} does not prove  ${\sf con}({\sf Q})$
  \textup (under the given assumptions on arithmetisation\textup).
   \end{theorem}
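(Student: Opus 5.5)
The plan is to construct a model of ${\sf PA}^-$ in which there is an element coding, via the $\upbeta$-function, a Hilbert-style \bs-proof of $\bot$. Then ${\sf con}(\bs)$ fails in that model, so ${\sf PA}^-$ does not prove it. The remaining clauses are routine. If a sub-theory $T$ of ${\sf PA}^-$ proved ${\sf con}(V)$ for some extension $V$ of \bs, then ${\sf PA}^-$ would prove ${\sf con}(V)$. Moreover, ${\sf PA}^-$ proves ${\sf con}(V)\to{\sf con}(\bs)$: this is purely formal, since a \bs-proof is literally a $V$-proof when the axiom and rule predicates of $V$ contain those of \bs\ in a way ${\sf PA}^-$ sees, which holds for the natural formalisations. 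Since {\sf R} and {\sf Q} are sub-theories of ${\sf PA}^-$ and (via propositional logic) extend \bs, the final sentence follows.

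For the model, I would take a discretely ordered ring of polynomials, say $\mathbb Z[X]$ or a ring of the form $\mathbb Z[X_1,\dots,X_n]$ (possibly with rational coefficients in the non-constant terms), ordered with $X$ infinite. The model of ${\sf PA}^-$ is its non-negative part. The key choice is an infinite proof length $\ell$ together with $\upbeta$-parameters $a,b$ such that the sequence $\upbeta(a,b,i)=\mathrm{rem}(a,1+(i+1)b)$ behaves as follows for all $i<\ell$ in the model:
\begin{itemize}
\item every entry is either $\gnum{\bot}$ or $\gnum{\bot\to\bot}$;
\item the last entry is $\gnum{\bot}$;
\item every $\gnum{\bot}$ entry has two earlier witnesses, namely an earlier $\gnum{\bot}$ and an earlier $\gnum{\bot\to\bot}$.
\end{itemize}

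The trick is that in a polynomial ring, division with remainder is only partially defined, so ``remainder'' must be computed in the model. I would make the proof consist of pathological indices $i$, which are non-standard polynomials. For these, the remainder of $a$ modulo $1+(i+1)b$ is governed by the leading coefficients. For instance, the entry would be $\gnum{\bot}$ when $i$ is nonstandard and $\gnum{\bot\to\bot}$ at $i=0$. Witnesses for MP at a nonstandard $i$ can then be $0$ (for $\bot\to\bot$) and some earlier nonstandard $j$, e.g.\ $i-1$ or $\lfloor i/2\rfloor$ when $i$ lies above the standard part. The difficulty is the standard indices: at $i=1$ we need $\bot$ from earlier entries. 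Following Bezboruah--Shepherdson, I would instead arrange the codes so that the entries at all standard positions are $\gnum{\bot\to\bot}$, and only nonstandard positions carry $\bot$. Every nonstandard $i$ has a nonstandard predecessor, because the nonstandard part has no least element. This is exactly the ill-foundedness the model can exploit.

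The main obstacle is choosing $a,b$ so that $\mathrm{rem}(a,1+(i+1)b)$ exists and takes exactly these values for every element $i<\ell$ of the ring. This requires solving, in the ring, $a=q_i(1+(i+1)b)+r_i$ uniformly in a polynomial variable $i$. I would try $b$ a large multiple of a factorial-like element and $a$ chosen by a Chinese-remainder construction done symbolically. Alternatively, I would pass to a recursive subring of $\mathbb Q[X]$ in which the elements $1+(i+1)b$ for nonstandard $i$ divide $a-\gnum{\bot}$, while for standard $i$ they divide $a-\gnum{\bot\to\bot}$. Verifying that the quotient polynomials lie in the non-negative part of the chosen ring, and that $\ell$ can be taken nonstandard while the ring still satisfies the ${\sf PA}^-$ axioms, is where I expect the real work to be. I would first try $\ell=X$ and $b$ with leading coefficient $1$, and check the divisibility conditions coefficientwise.
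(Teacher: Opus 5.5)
Your overall architecture matches the paper's sketch of the Bezboruah--Shepherdson argument. The model of ${\sf PA}^-$ contains a $\upbeta$-coded sequence of nonstandard length $\ell$. Its standard positions carry $\gnum{\bot\to\bot}$ and its nonstandard positions carry $\gnum{\bot}$, so every $\bot$ gets an earlier $\bot$ because there is no least nonstandard position. Your passage from ${\sf PA}^-\nvdash{\sf con}(\bs)$ to the claims about {\sf R} and {\sf Q} is also fine. However, the whole content of the theorem is the existence of such a model together with $a,b,\ell$, and you leave this open. Worse, the settings you propose to try cannot work: $\mathbb Z[X]$, $\mathbb Z[X_1,\dots,X_n]$, polynomials with rational non-constant coefficients, and recursive subrings of $\mathbb Q[X]$. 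So checking divisibility coefficientwise, or a ``symbolic'' Chinese remainder construction, is a dead end.

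\textbf{The obstruction.} For all $i,j$ we have $(j+1)(1+(i+1)b)-(i+1)(1+(j+1)b)=j-i$. Let the model be the non-negative part of a discretely ordered subring of $\mathbb Q[X_1,\dots,X_n]$.
\begin{itemize}
\item Its constants are integers, so the nonstandard $\ell$ is a non-constant polynomial. Hence for standard $k\geq 1$ the elements $d_k:=1+(\ell-k+1)b$ (with $b\neq 0$) are non-units of $\mathbb Q[\vec X]$.
\item Any common divisor of $d_k$ and $d_{k'}$ divides $k'-k$, a unit in $\mathbb Q[\vec X]$. So the $d_k$ are pairwise coprime.
\item Your pattern requires $a=q_kd_k+\gnum{\bot}$ with $q_k$ in the ring. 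Thus every $d_k$ divides $a-\gnum{\bot}$ in the UFD $\mathbb Q[\vec X]$.
\item A nonzero element has only finitely many prime factors, so $a=\gnum{\bot}$.
\item But then $\upbeta(a,b,i)=\gnum{\bot}$ for all $i\geq\gnum{\bot}$, including standard $i$. This contradicts your pattern.
\end{itemize}
Applying pigeonhole to the two possible values at the positions $\ell-k$, the same argument shows more: no such ring contains a $\upbeta$-coded \bs-proof of $\bot$ of nonstandard length at all.

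\textbf{What is missing.} The parameter $a$ must have infinitely many pairwise coprime divisors, which forces a genuinely non-Noetherian model. You need a construction of a discretely ordered ring that contains all the required quotients, for both the standard positions and the positions $\ell-k$. You must also control the remainder at every position below $\ell$. Most simply, arrange that the only elements below $\ell$ are $k$ and $\ell-k$ for standard $k$, so the positions have order type $\upomega+\upomega^\ast$ as in the paper's description. Building such a ring and verifying these properties is the real work. It plays the role that Lemma~\ref{goldensmurf} plays for the Markov coding, and your proposal does not supply it.
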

   
   The idea of the proof is to create a model of ${\sf PA}^-$ with a $\upbeta$-proof that starts with
   an $\upomega$-type sequence of occurrences of $(\bot\to \bot)$ and ends with an $\upomega^\ast$-type  sequence 
   of occurrences of $\bot$, where $\upomega^\ast$ is the reverse ordering of $\upomega$. This sequence fulfils the desiderata for a Hilbert-style proof of falsum
   in  \bs.
   
   Emil Je{\v{r}}{\'{a}}bek pointed out to me that we can use a variant of \bs\ using the Genzen sequent calculus as follows.
   In stead of the axiom $(\bot\to \bot)$ we use the sequent $\bot \To \bot$ and instead of $\bot$ we use the sequent $\To \bot$. In  place of
   modus ponens,
   we have the cut rule.
   
   \begin{ques}
   {\small The model constructed by Bezboruah and Shepherdson undoubtedly satisfies many other natural true sentences.
   This creates the opportunity to read off a better result. Can you find such sentences? A specific related question is: does
   the model constructed by Bezboruah and Shepherdson satisfy all true universal sentences?}\hspace{0.05cm}\footnote{Both 
   in the Bezboruah-Shepherdson model and in the model $\mc K$ constructed in our Section~\ref{markovsmurf}, we can define the cut of the standard numbers
   in such a way that the definitions pick out the identical cut in the standard model.
   This means that we can truly add all true arithmetical sentences \emph{relativised to these 
   respective cuts}. However, I do not feel these sentences are all that natural.}
   \end{ques}
   
   We have seen that the unprovability of the consistency of {\sf Q} in {\sf Q} is a very specific consequence both of the G\"odel-Pudl\'ak Theorem~\ref{pudlak}, which is an
   extension of the Second Incompleteness  Theorem \ref{gobu}, and  of
   the Bezboruah-Shepherdson Theorem~\ref{bezshe}. Clearly, it is fair to say that these, also when considered in this specific case,  are very different results even if they coincide in content.
   
   Let us briefly review the differences between the results as stated generally.
   \begin{itemize}
   \item
   The results have very different scope. For example, there is no hope to derive the fact that {\sf R} does not prove its own consistency using the
   methods of the proof of the Pudlak Theorem~\ref{pudlak2} (since under certain conditions {\sf R} interprets ${\sf R}+{\sf con}({\sf R})$). 
   Equally, there is no hope to show that {\sf EA} does not prove its own consistency using the methods
   of the proof of the Bezboruah-Shepherdson Theorem~\ref{bezshe} (since the internal proof they construct cannot exist in a model
   of {\sf EA}).
   \item
   The precise conditions on the proof-system and the arithmetisation differ. 
   
   In the Bezboruah-Shepherdson case, we have to use the sequence coding
   provided by the $\upbeta$-function. However, it seem clear that their basic idea can be extended to other forms of sequence coding. In fact, in their
   paper Bezboruah-Shepherdson sketch an argument for their result for a second form of sequence coding. The present paper provides a third example
   in Section~\ref{markovsmurf}.
   The constraint on the arithmetisation of syntax in the Bezboruah-Shepherdson approach is zero since we only need the two concrete sentences in the
   inconsistency proof. Finally, we need to use a Hilbert-style proof-system. By \jer's observation, the argument also works for a Genzen-style system.
    Since a lot of  variation on the construction is possible, it seems plausible that using other proof systems
    would also be
   feasible. However, this would require some further work.
   
   The G\"odel-Pudl\'ak proof is not bound to a specific sequence coding. The argument works for a wide array of G\"odel numberings,
   however, there is no systematic clarity on the possible variation on G\"odel numerings of the syntax.
   For example, the function tracking negation in Sol Feferman's great paper \cite{fefe:arit60} is $\lambda x\, .\, 2\cdot 3^x$. Thus, iterating negation
   grows superexponentially.
   Does the G\"odel numbering used by Feferman work for the case of Pudl\'ak's result?  
   The G\"odel-Pudl\'ak proof  can undoubtedly be realised for all reasonable proof systems.
   \item
   The Bezboruah-Shepherdson result does not generalise to an interpretation version. For example, the result tells us that
   ${\sf R} \nvdash {\sf con}({\sf R})$. However, if we assume that the bounded quantifiers in ${\sf con}({\sf R})$ are bounded by
  variables, then it is easy to see that {\sf R} interprets ${\sf R}+{\sf con}({\sf R})$. (The result is immediate from the main result
  of \cite{viss:whyR14}. However, it can also be done `by hand'.) I do not know what happens if we drop the condition on
  the form of the bounds.
  \item
  The G\"odel-Pudl\'ak proof also shows closure under L\"ob's rule. The Bez\-bo\-ruah-Shepherdson result does not.
  We expand on this idea in Appendix~\ref{loru}.
   \end{itemize}
   
   We have seen that in their generality the G\"odel-Pudl\'ak Theorem and the Bez\-bo\-ruah-Shepherdson Theorem are very different results.
   But what about the specific consequence that {\sf Q} does not prove ${\sf con}({\sf Q})$? Are there two homophonic such theorems?
   On the one hand, there is a tendency to say \emph{no}, of course, they are one and the same theorem. Theorems are not individuated by their proofs
   (whatever Wittgenstein may have said).
   For example, the Prime Number Theorem has many proofs but there is just one such theorem. On the other hand, we have the following two considerations.
   The precise conditions on the G\"odel numbering for both applications differ. Also, the proofs lead to very different generalisations.
   In contrast the different proofs of the Prime Number Theorem do not yield very different generalisations. My inclination is to say they
   are different results, but awareness of the divergent considerations seems to be more important than the specific choice one makes. 
   
   \begin{rem}{\small
   Emil  Je{\v{r}}{\'{a}}bek, in his paper \cite{jera:sequ12}, proves that ${\sf PA}^-$ is sequential using the $\upbeta$-function. How does Je{\v{r}}{\'{a}}bek's
   work relate to the model of Bezboruah and Shepherdson?\footnote{I thank Mateusz {\L}e{\l}yk for asking me this question.} 
    \jer\ showed me an argument that the inconsistence proof in the model provided by  Bezboruah and Shepherdson and also the inconsistency
    proof in the model provided our Section~\ref{markovsmurf} 
   do not have their length in the cut that is provided in his paper. In fact, in both models \jer's cut is the standard cut. See Appendix~\ref{emilsmurf}. 
   \jer's observation still leaves open whether we can adjoin elements to the  Bezboruah {\&} Shepherdson proof, in the sense that we
   can find a sequence that has up to the length, say $\ell$, of the Bezboruah {\&} Shepherdson proof the same projections as the Bezboruah {\&} Shepherdson proof, but that
   has at $\ell$ some other projection.}\end{rem}
   
   \begin{ques}
   {\small In the Bezboruah {\&} Shepherdson model can we adjoin any element to the \bs-inconsistency proof? For example, can we adjoin $c$?}
   \end{ques} 
     
   \section{The Bezboruah-Shepherdson Theorem for Markov Coding}\label{markovsmurf}
   
In this section, I prove my version of the Bezboruah-Shepherdson Theorem.
In Section~\ref{thecoding}, I will explain the Markov coding of sequences and prove some preliminary facts
that will be used to prove the main theorem. In Section~\ref{mainsmurf}, I will give the proof.
The main theorem runs as follows.

\begin{theorem}
Let the G\"odel numbers of $\bot$ and $(\bot\to \bot)$ be positive integers.
Suppose we use the Markov coding to code sequences and that we use the sequences to code Hilbert-style proofs. Then, the theory ${\sf PA}^-$ plus all true universal sentences does not prove the
   consistency of  \bs.
\end{theorem}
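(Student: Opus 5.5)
The plan is to build a countermodel $\mc K$ of ${\sf PA}^-$ plus all true universal sentences in which some element codes, under the Markov coding, a Hilbert-style \bs-proof of $\bot$. Following the Bezboruah--Shepherdson idea, the proof should look, from inside $\mc K$, like an $\upomega$-sequence of occurrences of $(\bot\to\bot)$ followed by an $\upomega^\ast$-sequence of occurrences of $\bot$. To secure all true universal sentences at no cost, I will take $\mc K$ to be the non-negative part of a subring of a nonstandard model $\mc N^\ast$ of true arithmetic, extended to its ring of differences. Any such subring containing $1$ is a discretely ordered commutative ring, so its non-negative part is a model of ${\sf PA}^-$. Universal sentences go down to substructures, so every true universal sentence holds in $\mc K$. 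The whole problem therefore reduces to choosing the generators of the subring cleverly.

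For the coding, I take the Nielsen--Markov fact: every matrix in ${\rm SL}_2$ with non-negative entries factors uniquely as a word in $\sspl 1101$ and $\sspl 1011$. A sequence $(a_0,\dots,a_{n-1})$ of positive numbers is coded by a word such as $\prod_i \sspl 1{a_i}01\sspl 1011$. Here the positivity of the G\"odel numbers of $\bot$ and $(\bot\to\bot)$ is used, so that the blocks are separated and the decomposition is unambiguous. The arithmetised statement ``$a$ occurs at position $i$ of $M$'' is existential: it says there are $P,Q$ in ${\rm SL}_2$ with non-negative entries such that $M=P\sspl 1a01\sspl 1011 Q$ and $P$ is a product of $i$ blocks. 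Consequently, in a small ring only those positions survive whose prefix and suffix matrices have entries in the ring.

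Fix a nonstandard $h\in\mc N^\ast$. In $\mc N^\ast$, let $M$ code the sequence of length $2h$ whose first $h$ entries are $\gnum{(\bot\to\bot)}$ and whose last $h$ entries are $\gnum{\bot}$. Its prefixes and suffixes are closed products of the form $(CD)^{j}$ with explicit entries, for $C=\sspl 1{c}01$, $D=\sspl 1011$ and $c=\gnum{(\bot\to\bot)}$ or $\gnum\bot$. Let $R$ be the subring generated by the entries of $M$, of $(C_1D)^{h}$, of $(C_2D)^{h}$, and of all standard-power shifts. I would try to take $R=\mathbb{Z}[u,v,\dots]$, with $u,v$ being entries of the powers at exponent $h$, which grow like $\lambda^h$ for the relevant eigenvalue $\lambda$.

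The key steps, in order, are as follows. First, verify that positions $n$ and $2h-n$, for standard $n$, are witnessed in $R$, since their prefix and suffix matrices are standard-polynomial expressions in the generators. Second, show that no other position is witnessed: every factorisation $M=PGQ$ in $R$ has $P$ (or $Q$) of standard block-length. Third, conclude that in $\mc K$ the sequence $M$ is a \bs-proof of $\bot$. Each $(\bot\to\bot)$ at a standard position is an axiom. Each $\bot$ at position $2h-n$ follows by Modus Ponens from the $\bot$ at $2h-n-1$ and any $(\bot\to\bot)$ at a standard position, both strictly earlier. Here we need that the length and order comparisons behave correctly, which holds because $R$ sits inside $\mc N^\ast$.

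The main obstacle is the second step, the non-existence of middle positions. The argument I would try first is a growth or degree argument. I would choose $h$ together with a suitable algebraic (or transcendence-type) independence, so that $R$ behaves like a polynomial ring in $\lambda^{h}$-type quantities over $\mathbb{Z}[\lambda]$. Then an intermediate prefix $(CD)^{j}$ with $j$ and $h-j$ both nonstandard would require entries of ``fractional degree'' $j/h$, which $R$ does not contain. To make this rigorous, I would pick $h$ so that the relevant elements are infinitesimally independent in the valuation on $\mc N^\ast$ given by the archimedean classes. The uniqueness of Nielsen--Markov factorisation then forces any prefix lying in $R$ to have an archimedean class in the value group generated by the classes of the generators; this rules out all nonstandard-middle $j$.
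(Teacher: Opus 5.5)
Your framing is right: take a subring of $\mf Z(\mc N^\ast)$ containing the entries of $M$, so that ${\sf PA}^-$ and all true universal sentences come for free. But your choice of generators wrecks the construction.

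\textbf{The switch point is in your ring.} You put the entries of $(C_1D)^h$ into $R$. That matrix is exactly the \emph{switch point}, the prefix of $M$ made of all the $(\bot\to\bot)$-blocks. The entries of $(C_2D)^{h-1}=(C_2D)^{-1}(C_2D)^h$ are $\mathbb Z$-combinations of your generators. Hence $M=(C_1D)^h\cdot(C_2D)\cdot(C_2D)^{h-1}$ is a factorisation inside $\mc K$. So $\mc K$ sees an occurrence of $\bot$, namely the prefix $(C_1D)^h(C_2D)$, that contains no strictly smaller occurrence of $\bot$. This is a universal fact true in $\mc N^\ast$, so it transfers down, and ${\sf proof}_0(M)$ fails in $\mc K$. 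The same factorisation refutes your Step~2, since both $P$ and $Q$ have nonstandard length. Your ``fractional degree'' heuristic cannot detect this, because the dangerous position $j=h$ has degree $1$. The whole content of the theorem is that the switch point must be \emph{absent} from the model.

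\textbf{A common block length cannot work either.} Removing those generators does not rescue the plan: with the same length $h$ for both blocks, the switch point can be forced into every ring containing the entries of $M$. Take $\gnum{(\bot\to\bot)}=1$ and $\gnum\bot=5$. Then $\lambda_5=\lambda_1^2$, hence $\mf s_{5,h}=\mf s_{1,2h}/3$. For $M=[1]^h[5]^h$ one checks that $\mf s_{1,h}=M_{00}+2M_{01}-2M_{10}-M_{11}$ and $\mf s_{1,h-1}=-M_{00}+M_{01}-3M_{10}$ for every $h$. The same happens in your variant ${\tt A}^c{\tt B}$ of the coding. So the independence you hope to arrange by choosing $h$ is simply unavailable.

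\textbf{How the paper avoids this.} It uses lengths $a\in\mc M$ and $b\in\mc N\setminus\mc M$, where $\mc N$ is an elementary end-extension of $\mc M$. It generates $\mc K_0$ by the products $\mc x_i\mc y_j$, with $\mc x_0,\mc x_1=\mf s_{m,a-1},\mf s_{m,a}$ and $\mc y_0,\mc y_1=\mf s_{n,b-1},\mf s_{n,b}$. The key step is Lemma~\ref{goldensmurf}, $\mc x_1\notin\mc K_0$, proved as follows:
\begin{enumerate}
\item Rewrite modulo the Pell relations $\mc x_1^2=(m+2)\mc x_0\mc x_1-\mc x_0^2+1$ and its analogue for $\mc y$.
\item Keep the invariant that $\mc x$-degree minus $\mc y$-degree is even.
\item Show that the top $\mc y$-degree part $\mc y_0^k(p_0+p_1\mc y_1/\mc y_0)$ dominates. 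The needed lower bound on $|p_0+p_1\mc y_1/\mc y_0|$, for possibly nonstandard $p_0,p_1$ in $\mc M$, is proved inside $\mc M$ from the irrationality of $\lambda_n$. It then transfers to $\mc N$, where $b$ lies above $\mc M$.
\end{enumerate}
The irrationality of $\lambda_n$, not unambiguous parsing, is where positivity of the G\"odel numbers is used.

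Finally, your Step~2 is stronger than needed. Write $m=\gnum{(\bot\to\bot)}$ and $n=\gnum\bot$. Once the switch point is absent, every prefix $\gamma$ in $\mc K$ followed by a $\bot$-block ends in a $\bot$-block. Then $[m]^{-1}\gamma[n]^{-1}\in\mc K_0$ supplies the earlier occurrences required by Modus Ponens, whether or not middle positions survive.
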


We note that ${\sf PA}^-$ plus all true universal sentences is alternatively axiomatised by just the subtraction axiom 
$\vdash x\leq y \to \exists z\,\; x+z =y$
plus all true universal sentences.

The idea of the proof is as follows. We consider non-standard models of true arithmetic $\mc M$ and $\mc N$.
Here $\mc N$ is an elementary end-extension of $\mc M$. Let $a$ be in $\mc M \setminus \mathbb N$ and let
$b$ be in $\mc N \setminus \mc M$. Let $\alpha_0$ be the Markov sequence in $\mc N$ consisting of  $a$ times $(\bot\to \bot)$.
We note that $\alpha_0$ is also in $\mc M$. Let $\alpha_1$ be the Markov sequence in $\mc N$ consisting of $b$ times $\bot$.
We take $\alpha := \alpha_0\alpha_1$.
The sequence $\alpha$ is not  a proof in $\mc N$. We proceed to  thin out $\mc N$ to a model $\mc K$ of
 ${\sf PA}^-$ plus the true universal sentences, such that $\alpha$ is in $\mc K$ but $\alpha_0$ is not.
Thus, in $\mc K$, the sequence $\alpha$ will be a
  proof of $\bot$ in $\mc K$, since it misses the point where we switch from the axioms to the conclusions. 
   
   \subsection{Markov Coding}\label{thecoding}
   
   The group ${\sf SL}_2(\mathbb Z)$ consists of the $2\times 2$ matrices of integers with determinant 0 and as operation matrix
   multiplication.
   We call its  part with non-negative entries ${\sf SL}_2(\mathbb Z^+)$ or ${\sf SL}_2(\mathbb N)$.
   The  insight on which Markov-style coding is based is the fact that
${\sf SL}_2(\mathbb N)$ is isomorphic with the free monoid on two generators.
The generators are
${\tt A} := \sspl 1101$ and $ {\tt B} :=\sspl 1011$. The empty string is the identity matrix.
This basic result  is due to
Jakob Nielsen. See \cite{niel:isom18}.
The use of this insight in the context of metamathematics goes back to Andrej Markov jr in his book
\cite{mark:theo54}. It follows that ${\sf SL}_2(\mathbb N)$
satisfies the axioms of full string theory with atoms {\tt A} and {\tt B}. 

The tally-numerals ${\tt A}^n$ have the simple form ${\tt A}^n = \sspl 1n01$. 
Thus, we can code the sequence $n_0,\dots,n_{k-1}$ as ${\tt B}{\tt A}^{n_0}\dots {\tt B}{\tt A}^{n_{k-1}}$.
It is my preference to call something \emph{a sequence} if it is given with an explicit projection function.
The data structure we are looking at does not have that. It is more something like a multiset with ordered occurrences.
We call such structures \emph{container strings}.\footnote{In an earlier version of this preprint, I called these \emph{container strings}.} 
We note that container strings are perfectly adequate for
describing proofs in Hilbert-style systems since there only the order of the sub-conclusions is
relevant and never their precise location.  

We will take over this coding idea in the context of  ${\sf SL}_2(\mc R^+)$, where $\mc R$ is a discretely ordered
commutative ring and $\mc R^+$ its non-negative part.
The structure ${\sf SL}_2(\mc R^+)$
preserves sufficiently many good properties of the free monoid on two generators. Apart from the
obvious properties, we have Tarski's Editors Property:
\begin{itemize}
\item
If $\alpha \beta = \gamma\delta$, then there is an $\eta$ such that\\
\hspace*{2cm} ($\alpha = \gamma\eta$ and $\eta\beta = \delta$) or 
($\alpha\eta=\gamma$ and $\beta = \eta \delta$).  
\end{itemize}
See \cite{viss:numb25} for a proof.

We briefly look at the complexity of the definitions of some concepts in ${\sf SL}_2(\mc R^+)$.
We define the formula class ${\sf E}_n$ and ${\sf U}_n$ as follows. The classes ${\sf E_0}$
and ${\sf U}_0$ consist of the quantifier-free formulas. 
We have (with the usual stipulation that the bounding term does not contain the variable it bounds):
\begin{itemize}
\item
$\eta_{n+1} ::= \eta_n \mid \exists x\bleq t \,\upsilon_n$, where the $\eta_n$ range over ${\sf E}_n$ and the $\upsilon_n$ over ${\sf U}_n$. 
\item
$\upsilon_{n+1} ::= \upsilon_n \mid \forall x\bleq t \,\eta_n$, where the $\eta_n$ range over ${\sf E}_n$ and the $\upsilon_n$ over ${\sf U}_n$. 
\end{itemize}

The variables $\alpha,\beta,\dots$ will range over ${\sf SL}_2(\mc R^+)$.
As usual, we write $\alpha_{ij}$ for the components of $\alpha$, where the $i,j$ range over $0,1$. We define:
\begin{itemize}
\item 
We write $\alpha \leq \beta$ iff $\alpha_{ij}\leq \beta_{ij}$, for all $i,j<2$.
Moreover, $\alpha < \beta$ iff $\alpha \leq \beta$ and $\alpha \neq \beta$.
We note that, for $\alpha$, $\beta$, $\gamma$ in ${\sf SL}_2(\mc R^+)$, with
$\alpha\beta= \gamma$, we have $\alpha\leq \gamma$ and $\beta \leq \gamma$.
\item
$\alpha$ is \emph{an initial substring} of $\beta$, or $\alpha \preceq_{\sf i} \beta$, iff 
$\exists \gamma \leq \beta\;\, \beta =\alpha\gamma$.
\emph{Prima facie} this is ${\sf E}_1$, but we can rephrase it as a quantifier-free formula.
 We have:
 $\alpha \preceq_{\sf i} \beta$ iff $\alpha^{-1}\beta$ is in ${\sf SL}_2(\mc R^+)$. Thus,
$\alpha \preceq_{\sf i} \beta$ iff $\alpha_{11}\beta_{00}\geq \alpha_{01}\beta_{10}$, 
$\alpha_{11}\beta_{01}\geq \alpha_{01}\beta_{11}$, $\alpha_{00}\beta_{10}\geq \alpha_{10}\beta_{00}$, 
and $\alpha_{00}\beta_{11}\geq \alpha_{10}\beta_{01}$.
 
We define being an end-string, or $\preceq_{\sf e}$, similarly.
\item 
The matrix $\alpha$ is \emph{a container string}, or ${\sf cs}(\alpha)$, iff $\alpha$ is either the empty string, i.e., the identity matrix, or ${\tt B}\preceq_{\sf i}\alpha$.
Thus, ${\sf cs}(\alpha)$ has a quantifier-free definition.
\item
It is convenient to have an ordering on container strings too. We write $\alpha \preceq_{\sf i}^{\sf u} \beta$ for $\alpha \preceq_{\sf i}\beta \wedge
{\sf cs}(\alpha) \wedge
{\sf cs}(\alpha^{-1}\beta)$. Thus,  $ \preceq_{\sf i}^{\sf u} $ has a quantifier-free definition. Similarly, for $ \preceq_{\sf e}^{\sf u}$. 

We note that $\alpha \preceq^{\sf u}_{\sf i}\beta$ implies that ${\sf cs}(\beta)$.
Using the Editors Principle, one can show that,
over ${\sf PA}^-$, assuming that $\alpha$ and $\beta$ are container strings, $\preceq_{\sf e}$ and $\preceq^{\sf u}_{\sf e}$ coincide.
\item
The singleton container string $[n]$ is defined by $[n] := {\tt B}{\tt A}^n$.
\item
$\alpha $ is \emph{an occurrence of $n$ in $\beta$}, or ${\sf occ}(\alpha,n,\beta)$, iff
 $\alpha \preceq^{\sf u}_{\sf i}\beta \wedge [n] \preceq^{\sf u}_{\sf e}\alpha$.
So, {\sf occ} is quantifier-free.
\item
$\pi$ is a \emph{proof} (in  \bs), or ${\sf proof}_0(\pi)$,  iff\\
{\small
\hspace*{0.2cm} ${\sf cs}(\pi) \wedge \forall \alpha \bleq \pi \,\forall n \bleq \alpha_{01}\bigl ({\sf occ}( \alpha, n,\pi) \to  (n = \gnum{\bot\to \bot} \vee
n=\gnum{\bot} )\bigl) \;\wedge$\\
\hspace*{0.2cm}  $ \forall \alpha \bleq \pi \Bigl ( {\sf occ}(\alpha,\gnum \bot,\pi) \to
\exists \beta \bles \alpha\, \exists \gamma \bles \alpha\, \bigl({\sf occ}(\beta,\gnum \bot, \alpha) \wedge 
 {\sf occ}(\gamma,\gnum {\bot\to \bot}, \alpha)\bigr)\Bigr )  
$.}\\
We note that ${\sf proof}_0$ is ${\sf U}_2$.
\item
$\pi$ is \emph{a proof of $n$}, or ${\sf proof}(\pi,n)$,  iff ${\sf proof}_0(\pi) \wedge [n]\preceq_{\sf e} \pi$.
So {\sf proof} is ${\sf U}_2$. We note that $n \leq \pi_{01}$. (A warning: ${\sf PA}^-$ does not show that
every proof is a proof of something.)
\item
${\sf con}( \bs)$ iff $\forall \pi\, \neg\,{\sf proof}(\pi,\gnum\bot)$. So,
${\sf con}( \bs)$ is $\forall^\ast{\sf E}_2$. Here $\forall^\ast$ denotes a block of unbounded quantifiers.
\end{itemize}

We proceed with the things specifically needed for the next subsection. The discussion at this point is in the
real world. Later we will apply these insights also inside non-standard models of true arithmetic.

   We consider the singleton container string $[n] = {\tt B}{\tt A}^n =\sspl 1n1{n+1}$.
   The characteristic equation is $x^2-(n+2)x+1=0$.
So,    the eigenvalues of this matrix are:{\small
   \[ \lambda_n := \frac{(n+2) + \sqrt{n(n+4)}}{2} \text{ and } \mu_n := \frac{(n+2) - \sqrt{n(n+4)}}{2}.\]
   }
  We note that 
  $\lambda_n+\mu_n= n+2$ and $\lambda_n\mu_n=1$. 
  Moreover, except in case $n=0$, we have $0<\mu_n < 1<\lambda_n$ and
  $(n+1)^2< n(n+4) < (n+2)^2$ and, thus, $\lambda_n$ and $\mu_n$ are both irrational.
  
  We proceed to study $[n]^m$. We define the following sequence.
   \begin{itemize}
\item
 ${\mf s}_{n,0} = 0$, ${\mf s}_{n,1} = 1$, ${\mf s}_{n,m+1} = (n+2) {\mf s}_{n,m} - {\mf s}_{n,m-1}$.
\end{itemize}
Our $\mf s_{n,m}$ is Matiyasevich's $\uppsi_{n-2,m}$. See \cite{mati:diop71}.
This sequence satisfies:
 $[n]^m = {\mf s}_{n,m} [n] - {\mf s}_{n,{m-1}} {\sf id}$.  
 In other words,
\[
[n]^m = \sspl {\mf s_{n,m} - \mf s_{n,m-1}} {n \mf s_{n,m}}{\mf s_{n,m}}{(n+1)\mf s_{n,m} - \mf s_{n,m-1}}.
\]
  This is easily verified by induction. Since the determinant of $[n]^m$ is 1, it follows that 
  \[ \mf s^2_{n,m} - (n+2) \mf s_{n,m}\mf s_{n,m-1} + \mf s^2_{n,m-1} = 1. \]
  Matiyasevich proves that, conversely, any unordered pair of non-negative $x,y$ that satisfies $x^2-(n+2)xy+y^2=1$, is an unordered pair
  $\mf s_{n,m}$, $\mf s_{n,m-1}$.
  
From this point on, we suppose that $n \neq 0$.  If we bring $[n]$ in diagonal form, we find that it is $\gamma^{-1}\sspl {\lambda_n}00{\mu_n}\gamma$, for some $\gamma$.
  It follows that $[n]^m= \gamma^{-1}\sspl {\lambda^m_n}00{\mu^m_n}\gamma$. Hence,
  $\mf s_{n,m}$ is a linear combination of  $\lambda^m_n$ and $\mu^m_n$ (where the coefficients do not depend on $m$).
  Filling in the initial values, we can compute the coefficients, which gives us:
  \[ \mf s_{n,m} = \frac{\lambda_n^m-\mu_n^m}{\lambda_n -\mu_n}.\]

\noindent
So, the sequence $\mf s_{n,m}$ grows exponentially. We note that 
\[ \frac{\mf s_{n,m}}{\mf s_{n,m-1}} -\lambda_n = \frac{\lambda_n\mu_n^{m-1}}{\mf s_{n,m-1}}.\]
So, the sequence $\frac{\mf s_{n,m}}{\mf s_{n,m-1}}$ converges exponentially to $\lambda_n$.
A crude estimate gives $0<\frac{\mf s_{n,m}}{\mf s_{n,m-1}}-\lambda_n < \frac{1}{2^{m-1}}$ (for $n>0$).

\subsection{The Argument}\label{mainsmurf}
We are now ready to prove our version of the Bezboruah {\&} Shepherdson result for Markov coding.
We will construct a model $\mc K$ from non-standard models of true arithmetic $\mc M$ and $\mc N$. In $\mc M$ and $\mc N$, all the results of the previous subsection
will be preserved to the non-standard context. We can definitionally extend true arithmetic with
integers, rationals, and real algebraic numbers. We  only need to add finitely many real algebraic numbers,
which makes the construction reasonably simple. 
To keep things readable, we will not mention these extensions and
the embeddings of the original numbers explicitly, but talk as if, e.g., $\frac{1}{2}$ is in a given model of true arithmetic.
We treat the algebraic numbers in a model as given by an approximating sequence. E.g., $\lambda_n$ will be
the limit of $\frac{\mf s_{n,i}}{\mf s_{n,i-1}}$. We note that e.g. $\lambda_n$ may be different across models since the
approximating sequences differ.

Let $\mc A$ be a non-standard model of true arithmetic. We write $\mf Z(\mc A)$ is the closure of $\mc A$ under subtraction
using the standard pairs construction. We note that 
$\mf Z(\mc A)$ is bi-interpretable with $\mc A$.

Let $\mc M$ and $\mc N$ be non-standard models of true arithmetic, where $\mc N$ is an elementary  end-extension of $\mc M$. 
 We know that there is such an $\mc N$ by the MacDowell-Specker Theorem. See \cite{kaye:mode91}.\footnote{Since we have the freedom to
 choose $\mc M$, we can easily avoid the use of MacDowell-Specker.}
 Let $a$ a be non-standard element in $\mc M$ and let $b$ be in $\mc N\setminus \mc M$.
 
For the time being,  we work in $\mc N$. Let $m$ and $n$ be strictly positive standard numbers.
 We consider  the matrix $\beta :=   [m]^a[n]^b$. 
We set $\mc x_0 := {\mf s}_{m,a-1}$, $\mc x_1 := {\mf s}_{m,a}$, $\mc y_0 = {\mf s}_{n,b-1}$, $\mc y_1 = {\mf s}_{n,b}$.
Each entry of $\beta$
 is a linear combination of products of the form $\mc x_i\mc y_j$, with standard integer coefficients.
 
 We consider the submodel $\mc K_0$ of $\mf Z(\mc N)$ generated by the ring operations from
 the products  $\mc x_i\mc y_j$. The model $\mc K$ will be the non-negative part of $\mc K_0$.

 \begin{lem}
 \begin{enumerate}[i.]
 \item
 $\mc K_0$ is a discretely ordered commutative ring that satisfies all true universal sentences of $\mathbb Z$.
 \item 
 $\mc K$  is a model of ${\sf PA}^-$ that satisfies all true universal sentences of $\mathbb N$.
 \end{enumerate}
 \end{lem}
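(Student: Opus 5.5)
Part (i) follows from substructure preservation, since $\mf Z(\mc N)$ satisfies all true universal sentences of $\mathbb Z$. The key steps are as follows.

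First, since $\mc N$ is a model of true arithmetic, $\mf Z(\mc N)$ is elementarily equivalent to $\mathbb Z$ in the ring language with order. This holds because $\mf Z(\mc N)$ is interpreted in $\mc N$ uniformly, by the same interpretation that produces $\mathbb Z$ from $\mathbb N$, so every sentence true in $\mathbb Z$ translates to a sentence true in $\mathbb N$, hence true in $\mc N$.

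Second, $\mc K_0$ is the subring of $\mf Z(\mc N)$ generated by the $\mc x_i\mc y_j$, so it contains $0$, $1$, $-1$ and is closed under $+$, $-$, $\cdot$. With the restricted order, $\mc K_0$ is a substructure of $\mf Z(\mc N)$ in the signature $0,1,+,-,\cdot,\leq$.

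Third, universal sentences are preserved downward to substructures. Hence $\mc K_0$ satisfies every true universal sentence of $\mathbb Z$. The axioms of a discretely ordered commutative ring are universal:
\begin{itemize}
\item the commutative ring axioms;
\item linearity and compatibility of $\leq$ with $+$ and with multiplication by non-negatives;
\item discreteness, $\forall x\,(0<x\to 1\leq x)$.
\end{itemize}
So (i) follows.

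For (ii), $\mc K$ is the non-negative part of a discretely ordered commutative ring, so it is a model of ${\sf PA}^-$. The only axiom that needs attention is subtraction: if $x\leq y$ in $\mc K$, then $y-x\in\mc K_0$ is non-negative, so it lies in $\mc K$. For the universal sentences, take a true universal sentence $\forall\vec x\,\phi(\vec x)$ of $\mathbb N$, with $\phi$ quantifier-free. Translate it into $\forall\vec x\,(\bigwedge_i 0\leq x_i\to\phi(\vec x))$, which is a true universal sentence of $\mathbb Z$. By (i) it holds in $\mc K_0$, and so $\forall\vec x\,\phi(\vec x)$ holds in $\mc K$. This works because the operations of $\mc K$ are the restrictions of those of $\mc K_0$, and $\mc K$ is closed under them.

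The only point needing care is the first step. We must check that truth of arithmetic transfers to $\mf Z(\mc N)$ via the pairs construction, uniformly in the model. I would handle this by noting that the pairs construction is a fixed interpretation, so it commutes with elementary equivalence: $\mc N\equiv\mathbb N$ gives $\mf Z(\mc N)\equiv\mf Z(\mathbb N)\cong\mathbb Z$. No special property of the generators $\mc x_i\mc y_j$ is used; they matter only for the later part of the argument.
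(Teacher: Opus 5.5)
Your proposal is correct and is essentially the paper's argument written out in full. The paper calls (i) immediate, since $\mc K_0$ is a subring of $\mf Z(\mc N)\equiv\mathbb Z$ and so inherits all true universal sentences, including the discretely ordered ring axioms. It derives (ii) from the quantifier-free definability of the non-negative part, which is exactly your relativisation of $\forall\vv x\,\phi(\vv x)$ to $\bigwedge_i 0\leq x_i$; your explicit check of the subtraction axiom, the only non-universal axiom of ${\sf PA}^-$, spells out a step the paper leaves implicit.
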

 
\begin{proof}
Claim (i) is immediate. Claim (ii) follows since the definition of the non-negative part is quantifier-free.
\end{proof}

 We will treat the $\mc x_i$ and $\mc y_j$ as if they were variables.
 An element of $\mc K_0$ can be written as a sum $\varSigma$ of monomials of the form $c\,\mc x_0^{k_0}\mc x_1^{k_1}\mc y_0^{\ell_0}\mc y_1^{\ell_1}$, where
 $k_0+k_1= \ell_0+\ell_1$ and $c$ is a non-zero standard integer, plus standard integer $d$. 
 We think of these forms modulo commutation and association of addition and multiplication. We will always assume that the sum of the two monomials
  $c_0\mc x_0^{k_0}\mc x_1^{k_1}\mc y_0^{\ell_0}\mc y_1^{\ell_1}$ and $c_1\mc x_0^{k_0}\mc x_1^{k_1}\mc y_0^{\ell_0}\mc y_1^{\ell_1}$ in $\varSigma$
  is contracted to the monomial $(c_0+c_1)\mc x_0^{k_0}\mc x_1^{k_1}\mc y_0^{\ell_0}\mc y_1^{\ell_1}$.

We call $k_i$ \emph{the $\mc x_i$-degree} and $\mc \ell_i$ \emph{the $\mc y_i$-degree} of the monomial. For a constant, we take these degrees to be zero.
The \emph{$\mc x$-degree} is the sum of the $\mc x_0$-degree and the $\mc x_1$-degree and, similarly, for \emph{the $\mc y$-degree}.
 
 The following lemma is our main result. I worked with ChatGPT to develop the proof. In Remark~\ref{chatsmurf}, I describe its contribution.
   
  \begin{lem}\label{goldensmurf}
 The element $\mc x_1$ is not in $\mc K_0$.
  \end{lem}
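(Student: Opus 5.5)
The plan is to prove Lemma~\ref{goldensmurf} by contradiction. Suppose $\mc x_1 = P$, where $P$ is a sum of monomials $c\,\mc x_0^{k_0}\mc x_1^{k_1}\mc y_0^{\ell_0}\mc y_1^{\ell_1}$ with $k_0+k_1=\ell_0+\ell_1$, plus a standard constant. The strategy has two parts. First, use that $b$ lies above $\mc M$ to eliminate all dependence on the $\mc y$'s. Second, use that $a$ is non-standard to derive a contradiction from a parity mismatch.

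\emph{Step 1 (explicit forms).} Put $D:=n(n+4)$ and $\Lambda:=\lambda_n^{\,b-1}$, so that $\mu_n^{\,b-1}=\Lambda^{-1}$. By the closed form of $\mf s_{n,m}$ from Section~\ref{thecoding}, we have $\mc y_0=(\Lambda-\Lambda^{-1})/\sqrt D$ and $\mc y_1=(\lambda_n\Lambda-\mu_n\Lambda^{-1})/\sqrt D$. Substituting these into $P$ gives
\[ P=\sum_{-e\leq k\leq e}\Lambda^k C_k, \]
where each $C_k$ is a polynomial in $\mc x_0,\mc x_1$ with coefficients in $\mathbb Q(\sqrt D)$. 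A monomial of $\mc y$-degree $d$ contributes only powers $\Lambda^{d},\Lambda^{d-2},\dots,\Lambda^{-d}$, and its $\mc x$-degree is also $d$. Hence $C_0$ only involves monomials of even $\mc x$-degree.

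Now let $\sigma$ be the conjugation $\sqrt D\mapsto-\sqrt D$, which swaps $\lambda_n$ and $\mu_n$, combined with $\Lambda\mapsto\Lambda^{-1}$. A direct check shows that $\sigma$ fixes $\mc y_0$ and $\mc y_1$ as formal expressions. Consequently $C_{-k}=C_k^\sigma$, and $C_0$ has rational coefficients.

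\emph{Step 2 (killing positive powers).} Each $C_k$ has the form $A+B\sqrt D$ with $A,B\in\mathbb Q[\mc x_0,\mc x_1]$, so its value lies in (the field generated by) $\mc M$. If $A+B\sqrt D\neq 0$, then its norm $A^2-DB^2$ is a non-zero rational with standard denominator. Therefore $|C_k|\geq 1/(c(|A|+|B|))$, which is bounded below by the reciprocal of an element of $\mc M$. On the other hand, $\Lambda$ exceeds every element of $\mc M$, since $\mf s_{n,b-1}\geq b-1$. Let $k\geq 1$ be the largest index with $C_k\neq 0$. Then $\Lambda^kC_k$ dominates $\mc x_1$ and every lower-order term, contradicting $\mc x_1=P$. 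So $C_k=0$ for all $k\geq 1$. By the $\sigma$-symmetry, which is the point of introducing $\sigma$, also $C_{-k}=C_k^\sigma=0$. We conclude that $\mc x_1=C_0(\mc x_0,\mc x_1)$ holds exactly, where $C_0\in\mathbb Q[u,v]$ is a standard polynomial all of whose monomials have even degree.

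\emph{Step 3 (parity in $\mc M$).} Repeat the substitution trick with $\Xi:=\lambda_m^{\,a-1}$, so that $\mc x_0=(\Xi-\Xi^{-1})/\sqrt{D'}$ and $\mc x_1=(\lambda_m\Xi-\mu_m\Xi^{-1})/\sqrt{D'}$, where $D'=m(m+4)$. Then $C_0(\mc x_0,\mc x_1)-\mc x_1$ becomes a Laurent polynomial in $\Xi$ with standard real-algebraic coefficients. The terms coming from $C_0$ involve only even powers of $\Xi$, while $\mc x_1$ contributes $\lambda_m\Xi/\sqrt{D'}$ with non-zero coefficient. So this Laurent polynomial is not identically zero. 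Multiplying by a standard power of $\Xi$ turns it into a non-zero standard polynomial that vanishes at $\Xi$. But all roots of such a polynomial are standard-bounded, whereas $\Xi$ is non-standard because $a$ is. This is the desired contradiction.

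\emph{Main obstacle.} The delicate point is Step 2. A naive expansion with $\mc y_1\approx\lambda_n\mc y_0$ fails, because the error $\mc y_0(\mc y_1-\lambda_n\mc y_0)$ is of order one rather than infinitesimal. This is why I pass to the exact Laurent expansion in $\Lambda$ and use the $\sigma$-symmetry to dispose of the negative powers. The norm lower bound also needs care to make it rigorous inside the definitional extensions of true arithmetic with finitely many real algebraic numbers.
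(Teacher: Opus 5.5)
Your proof is correct, and its technical core is genuinely different from the paper's, although the two share the same architecture. Both first use $b>\mc M$ to remove the $\mc y$'s, and then use parity of the $\mc x$-degree together with the non-standardness of $a$.

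\emph{The paper's route.} It stays with integer polynomials. It rewrites with the Pell identities $\mc x_1^2=(m+2)\mc x_0\mc x_1-\mc x_0^2+1$ and $\mc y_1^2=(n+2)\mc y_0\mc y_1-\mc y_0^2+1$, which preserve the parity of $\mc x$-degree minus $\mc y$-degree, until it reaches a normal form at most linear in $\mc x_1$ and in $\mc y_1$. It then isolates the top $\mc y$-degree part $\mc y_0^k(p_0+p_1\frac{\mc y_1}{\mc y_0})$, where $p_0,p_1$ take values in $\mc M$. The bracket is bounded away from $0$ by the irrationality of $\lambda_n$ inside $\mc M$. That bound is transferred to index $b$ along the approximations $\mf s_{n,i}/\mf s_{n,i-1}$, using $\mc M\prec\mc N$ (the step (\dag)). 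The case of $\mc y$-degree $0$ repeats the argument with $\lambda_m$, and parity forces $\mc x$-degree at least $2$.

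\emph{Your route.} You diagonalise exactly.
\begin{enumerate}[i.]
\item Binet's formula turns $P$ into a Laurent polynomial in $\Lambda$.
\item The lower bound on a non-zero coefficient comes from the norm of $\mathbb Q(\sqrt D)$, not from approximation plus transfer.
\item The symmetry $\sigma$ disposes of the negative powers exactly. This step is legitimate because the values of $A$ and $B$ are $\mc M$-rationals, so vanishing of $C_k$ forces vanishing of $C_k^\sigma$.
\item The parity step becomes even versus odd powers of $\Xi$, combined with a root bound.
\end{enumerate}

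\emph{What each buys.} Your version needs no rewriting system and no termination argument. It also never uses elementarity: it only needs $\Lambda$ to exceed the values of standard polynomials in $\mc x_0,\mc x_1$. This suggests that a single model with, say, $b=a^2$ would already suffice, a question the paper's closing remark leaves open. The paper's version needs real algebraic numbers only inside $\mc M$; in $\mc N$ it uses only the rational $\mc y_1/\mc y_0$. Its normal form is also reused in the proof of Theorem~\ref{bythewaysmurf}.

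Finally, the obstacle you flag does not arise for the paper. After normalisation the lower-order terms are at most $\mc y_0^{k-1}$ times elements of $\mc M$, so any positive lower bound $q\in\mc M$ on the bracket is enough.
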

  
  \begin{proof}
 \emph{We work in $\mc N$.}   Since we have full arithmetical truth to work with, there are no worries about verifiability.
   We start with $\varSigma$ representing a polynomial that defines an element of $\mc K_0$ as above.
   We show that $\varSigma$ cannot have value  $x_1$.
  
  We say that $\Theta$ is a \emph{good} polynomial iff
  $\Theta$ is the sum of monomials of the form
   $c\mc x_0^{k_0}\mc x_1^{k_1}\mc y_0^{\ell_0}\mc y_1^{\ell_1}$ plus a standard integer,
   where $c$ is a non-zero standard integer and the difference  between the $\mc x$-degree and the $\mc y$-degree is even.
   We note that our starting point $\varSigma$ is good.
   
 We define a reduction system  on good polynomials where the basic step is either replacing an occurrence of $\mc x_1^2$ by $(m+2)\mc x_0\mc x_1-\mc x_0^2+1$ or 
 an occurrence of $\mc y_1^2$ by $(n+2)\mc y_0\mc y_1-\mc y_0^2+1$ and transforming  the resulting term
  to a good polynomial in the obvious way using the ring properties. Clearly, the transformation preserves the value and its result is again good.
  We can see that this reduction terminates in a not-necessarily unique normal form by considering the multiset of $\mc x_1$-degrees and the multiset of
  $\mc y_1$-degrees occurring in the sum. With each reduction step one of the two multisets decreases in the multiset ordering. 

Let $\varSigma^\ast$ be a normal form obtained by the rewriting. Each monomial in $\varSigma^\ast$ contains at most one occurrence of $\mc x_1$ and at most one
occurrence of $\mc y_1$.
 Let $k$ be the highest $\mc y$-degree of a monomial in $\Sigma^\ast$. We distinguish two cases: $k>0$ and $k=0$. 

Suppose $k>0$.
We consider the sum of the terms with $\mc y$-degree $k$.
This is of the form $p_0\mc y_0^k +p_1\mc y_0^{k-1}\mc y_1 = \mc y_0^k(p_0+p_1\frac{\mc y_1}{\mc y_0})$, where the $p_i$ are polynomials in the $\mc x_i$ (with standard integer
coefficients), i.e.
$\mc M$-integers, and one of the $p_i$ is non-zero.

\emph{Now for a moment we work in $\mc M$.} We note that, for some $\mc M$-rational $q$, we have $|p_0+p_1\lambda_n| > q$. 
We remind the reader that $\lambda_n$ is here  the $\lambda_n$ of $\mc M$.
Since the $\frac{\mf s_{n,i}}{\mf s_{n,i-1}}$ approximate
$\lambda_n$, it follows that, for some sufficiently large non-negative $\mc M$-integer $e$, we have: (\dag) for all $i>e$, $|p_0+p_1\frac{\mf s_{n,i}}{\mf s_{n,i-1}}| > q$.
Since $\mc N$ is an elementary extension of $\mc M$ the statement (\dag) is inherited by $\mc N$. 

\emph{We switch back to $\mc N$.}
 It follows that $|p_0+p_1\frac{\mc y_1}{\mc y_{0}}| > q$.
Hence,  $\mc y_0^k(p_0+p_1\frac{\mc y_1}{\mc y_0})$ will dominate all monomials of lower degrees and, thus, the value of $\varSigma^\ast$ cannot be $\mc x_1$.

Suppose $k=0$. In this case, $\varSigma^\ast$ is a sum of monomials of the form  $c\mc x_0^i \mc x_1$ or $c\mc x_0^j$, for $c$ a non-zero standard integer, plus a standard integer.
Here $i+1$ and $j$ are even. Suppose the largest $\mc x$-degree of a monomial in  $\varSigma^\ast$ is $\ell >0$. This implies that $\ell \geq 2$.
The sum of monomials of $\mc x$-degree $\ell$ is of the form $r_0\mc x_0^\ell +r_1\mc x_0^{\ell-1}\mc x_1 = \mc x_0^\ell(r_0+r_1\frac{\mc x_1}{\mc x_0})$, where the $r_i$ are standard integers.
In the same way as above (using $\lambda_m$ in stead of $\lambda_n$ and standard rational in stead of
$\mc M$-rational), we show that there is a standard rational $q$ such that $|r_0+r_1\frac{\mc x_1}{\mc x_0}|>q$.
Thus, $ \mc x_0^\ell(r_0+r_1\frac{\mc x_1}{\mc x_0})$ will dominate all mononomials of lower degree. Since $\ell\geq 2$, it follows that $\mc x_1$ will not be a value.

In case the highest degree $k$ is zero, the value is a standard integer, so again not equal to $\mc x_1$.
  \end{proof}
  
  The matrix $\beta =   [m]^a[n]^b$ is, almost by definition, in $\mc K$. 

\begin{lem}\label{slotsmurf}
The elements occurring in the container string $\beta$ in $\mc K$ are precisely $m$ and $n$. The first occurrence of an element is an occurrence of $m$ and the last
occurrence is an occurrence of $n$.
Moreover, every occurrence of $n$ is preceded by an occurrence of
$m$ and an occurrence of $n$.
\end{lem}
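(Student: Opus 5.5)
The plan is to pass from $\mc K$ to $\mc N$, use the free-monoid structure of ${\sf SL}_2$ there, and come back to $\mc K$. Lemma~\ref{goldensmurf} enters at exactly one point: the switch from $[m]^a$ to $[n]^b$ must not be visible in $\mc K$.

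\emph{Reduction to $\mc N$.} All the notions involved, namely $\preceq_{\sf i}$, $\preceq^{\sf u}_{\sf i}$, $\preceq^{\sf u}_{\sf e}$, ${\sf cs}$ and ${\sf occ}$, are quantifier-free in the matrix entries. Since $\mc K$ is a substructure of the non-negative part of $\mf Z(\mc N)$, an occurrence $\alpha$ of $k$ in $\beta$ in the sense of $\mc K$ is also one in the sense of $\mc N$. In $\mc N$, a model of true arithmetic, the Nielsen theorem holds internally: ${\sf SL}_2(\mc N)$ is, from the internal point of view, the free monoid on ${\tt A},{\tt B}$. It follows that the initial container substrings of $\beta=[m]^a[n]^b$ are exactly $[m]^i$ for $i\leq a$ and $[m]^a[n]^j$ for $j\leq b$, where $i,j$ are $\mc N$-numbers. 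The last singleton of such a string is $[m]$ in the first case ($i\geq 1$) and $[n]$ in the second ($j\geq 1$). Since $[k]\preceq^{\sf u}_{\sf e}\alpha$ forces $[k]$ to be that last singleton, every element occurring in $\beta$ in $\mc K$ is $m$ or $n$.

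\emph{Existence, first and last occurrences.} The matrix $[m]$ has standard entries, so it lies in $\mc K$. It is an occurrence of $m$, and it is the $\leq$-least nonempty initial container substring. Likewise $\beta\in\mc K$ itself is an occurrence of $n$, since $[n]\preceq_{\sf e}\beta$, and it is the largest one. Hence both $m$ and $n$ occur, the first occurrence is of $m$, and the last occurrence is of $n$.

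\emph{Every occurrence of $n$ is preceded by an occurrence of $m$ and an occurrence of $n$.} Let $\alpha\in\mc K$ be an occurrence of $n$. By the first step, $\alpha=[m]^a[n]^j$ with $j\geq 1$. The occurrence $[m]\bles\alpha$ of $m$ lies in $\mc K$ and is an occurrence of $m$ in $\alpha$.

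For the preceding occurrence of $n$, take $\alpha' := \alpha[n]^{-1}$. Because $[n]^{-1}$ has standard integer entries, $\alpha'$ has entries in $\mc K_0$. If $j\geq 2$, then $\alpha'=[m]^a[n]^{j-1}$ has non-negative entries, so $\alpha'\in\mc K$. It is an occurrence of $n$ in $\alpha$ with $\alpha'\bles\alpha$.

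The main obstacle is to exclude $j=1$. In that case $\alpha'=[m]^a$ would be in $\mc K$. By the formula for powers of $[m]$, its lower-left entry is $\mf s_{m,a}=\mc x_1$, so $\mc x_1\in\mc K_0$. This contradicts Lemma~\ref{goldensmurf}. Hence $j\geq 2$, and the required earlier occurrence of $n$ exists.
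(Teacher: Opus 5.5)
Your proposal is correct and follows essentially the paper's own argument. You transfer the quantifier-free notions between $\mc K$ and $\mc N$, use the internal free-monoid structure in $\mc N$ to pin down every occurrence as $[m]^i$ or $[m]^a[n]^j$, and invoke Lemma~\ref{goldensmurf} exactly once, to rule out that the prefix before an occurrence of $n$ is $[m]^a$. The only difference is bookkeeping: you work with the occurrence $\alpha$ itself, exhibiting $[m]$ and $\alpha[n]^{-1}$ as the preceding occurrences, whereas the paper works with the prefix $\gamma$ in $\gamma[n]\delta=\beta$ and checks that $[m]^{-1}\gamma[n]^{-1}$ is a container string.
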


\begin{proof}
Clearly, if some elements $\vv d$ of $\mc K$ have a quantifier-free property in $\mc N$, they also have this property in $\mc K$.

Suppose $c$ in $\mc K$ is an element of $\beta$. This means that there are container strings $\gamma$ and $\delta$ in $\mc K$ such that
$\gamma[c]\delta = \beta$. Since being a container string is quantifier-free, it follows that $c$ is also an element
of $\beta$ in $\mc N$. But then $c=m$ or $c = n$. 

We easily see that $\beta_0 :=[m]^{-1}\beta$ is in $\mc K_0$, since its entries are linear combinations with coefficients if $\mathbb Z$ of the
entries of $\beta$.
Since $\beta_0$ is a container string in $\mc N$, it is a container string in $\mc K$. 
Similarly, we see that $n$ is the last element that occurs in $\beta$ by considering $\beta_1:= \beta[n]^{-1}$.

Consider any occurrence of $n$ in $\beta$ according to $\mc K$. Suppose we have, in $\mc K$, that $\gamma[n]\delta = \beta$,
for container strings $\gamma$ and $\delta$. We consider $\gamma_0 :=[m]^{-1}\gamma[n]^{-1}$. It is easily seen that
$\gamma_0$ is in $\mc K_0$, since its entries are linear combinations with coeffients in $\mathbb Z$ of the entries of $\gamma$.
We note that, in $\mc N$, the matrix $\gamma$ is not $[m]^a$, since $\mc x_1$ is an entry of 
$[m]^a$.
Thus,  $\gamma$ will end in $n$ and, hence, $\gamma_0$ will be a container string in $\mc N$ and, thus, also a container string in $\mc K$. So, we see that occurrences of $m$ and
$n$ precede the given occurrence of $n$.
\end{proof}

With Lemma~\ref{slotsmurf}, we are done. Taking for $m$ the G\"odel number of $(\bot\to\bot)$ and for $n$ the G\"odel number of $\bot$,
we find a Hilbert-style proof of $\bot$. We have to assume that the  G\"odel numbers of $(\bot\to\bot)$ and $\bot$ are non-zero.
We note that, for our applications to stronger systems, nothing depends on the
presence of $\bot$ in the language. We can replace $\bot$ by any sentence $\phi$ in the language of the theory at hand.

\begin{rem}
{\small  The means used to prove our result could be lighter. In the first place we do not really need $\mc N$ to be an elementary \emph{end-}extension of $\mc M$.
An elementary extension simpliciter would suffice. In the second place, we do not need the full power of being elementary, $\Sigma^0_n$-elementary, for sufficiently
large $n$, suffices. This last choice would have the advantage that we can make the extension $\mc N$ definable in $\mc M$. This would allow us
to have $\mc M$ think about $\mc N$, which could improve the presentation heuristically.

It seems probable to me that, instead of having a second non-standard model, we can choose $b$ large enough in $\mc M$, perhaps $b=2^a$. However, I did not
explore this possibility. Similarly, we could try to avoid non-standard models of a substantial portion of arithmetic and work with a suitable
quotient of $\mathbb Z[{\sf X},{\sf Y}]$. It is somewhat subtle to define the right ordering here. 
It would be an added insight if we could make such a model decidable. Again, I have not explored this possibility.}
\end{rem}

For our present purposes, we do not need more information about our model $\mc K$. However it is always nice to know a bit more.

\begin{theorem}\label{bythewaysmurf}
\begin{enumerate}[i.]
\item
The element $\mc x_0^2$ is in $\mc K$. 
\item
For every $u$ in $\mc K$, there is a standard positive  $p$ such that $\frac {\mc x_0^2} p < u$.
\item
For every standard positive $r$, there is a $v$ in $\mc K$ such that $v  <  \frac {\mc x_0^2} r $.
\end{enumerate}
 \end{theorem}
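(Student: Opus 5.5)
For (i) I will use the Pell-type identity from Section~\ref{thecoding}. Since $\mc y_1=\mf s_{n,b}$ and $\mc y_0=\mf s_{n,b-1}$, we have $\mc y_1^2-(n+2)\mc y_0\mc y_1+\mc y_0^2=1$ in $\mc N$. Multiplying this identity by $\mc x_i\mc x_j$ gives
\[ \mc x_i\mc x_j=(\mc x_i\mc y_1)(\mc x_j\mc y_1)-(n+2)(\mc x_i\mc y_0)(\mc x_j\mc y_1)+(\mc x_i\mc y_0)(\mc x_j\mc y_0). \]
The right-hand side is a ring combination of the generators of $\mc K_0$. Hence every quadratic monomial $\mc x_i\mc x_j$ lies in $\mc K_0$, and $\mc x_0^2$ in particular lies in $\mc K_0$. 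Since $\mc x_0^2\geq 0$, it lies in $\mc K$.

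For (ii), I read the claim as being about non-standard $u$, since for standard $u$ it is obviously false. Take a good polynomial representing $u$ and pass to a normal form $\varSigma^\ast$ exactly as in the proof of Lemma~\ref{goldensmurf}. I then split into the same cases as there.

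If the highest $\mc y$-degree is $k>0$, the leading part is $\mc y_0^k(p_0+p_1\frac{\mc y_1}{\mc y_0})$. As in that proof, there is a positive $\mc M$-rational $q$ with $|p_0+p_1\frac{\mc y_1}{\mc y_0}|>q$, and this leading part dominates the rest. Because $u>0$, the leading part must be positive, so $u>\mc y_0^k q/2\geq \mc y_0 q/2$. Now $\mc y_0\notin\mc M$ and $\mc N$ end-extends $\mc M$, so $\mc y_0 q/2$ exceeds every element of $\mc M$, and in particular it exceeds $\mc x_0^2$.

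If $k=0$ and the top $\mc x$-degree is $\ell$, then $\ell=0$ would make $u$ standard. So $\ell\geq 2$. The leading part is $\mc x_0^\ell(r_0+r_1\frac{\mc x_1}{\mc x_0})$ with a standard rational lower bound $q$ on the absolute value of the bracket, and again it dominates the lower-degree terms. This gives $u>\mc x_0^\ell q/2\geq \mc x_0^2 q/2$, so any standard $p>2/q$ works. The main care needed is checking that the domination estimates of Lemma~\ref{goldensmurf} really give these quantitative lower bounds, and not merely $u\neq\mc x_1$.

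For (iii), which I also read as asking for a positive $v$, I will take $v=c_0\mc x_0^2+c_1\mc x_0\mc x_1=\mc x_0^2\bigl(c_0+c_1\frac{\mc x_1}{\mc x_0}\bigr)$ with standard integers $c_0,c_1$; this lies in $\mc K_0$ by (i). Since $\lambda_m$ is irrational, Dirichlet's approximation theorem gives standard integers with $0<c_0+c_1\lambda_m<\frac1{2r}$. By the estimate $0<\frac{\mf s_{m,i}}{\mf s_{m,i-1}}-\lambda_m<2^{-(i-1)}$, transferred to $\mc N$ with the non-standard index $i=a$, the ratio $\frac{\mc x_1}{\mc x_0}$ differs from $\lambda_m$ by an infinitesimal. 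Therefore the bracket lies strictly between $0$ and $\frac1r$. It follows that $v$ is a positive element of $\mc K$ with $v<\frac{\mc x_0^2}{r}$, and $v$ is non-standard.
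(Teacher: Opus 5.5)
Your proposal is correct and follows essentially the paper's route: (i) uses the Pell-type identity for $\mf s_{n,\cdot}$, (ii) uses the leading-term domination from the proof of Lemma~\ref{goldensmurf} together with the irrationality of $\lambda_m$, and (iii) uses Dirichlet's approximation theorem applied to $c_0+c_1\lambda_m$. The differences are minor: in (ii) you bound an arbitrary non-standard $u$ directly, rather than first reducing (as the paper does) to elements of $\mc x$-degree $2$ and $\mc y$-degree $0$, and your version of (i) explicitly shows $\mc x_0\mc x_1\in\mc K_0$, which the paper uses tacitly in (iii).
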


\noindent
In other words, the non-standard elements of $\mc K$ are co-initial in the set 
\[\{u\mathop{\in} \mc K \mid \exists p \mathop{\in} \omega{\setminus} \{0\}\;\, \frac{\mc x_0^2}p \leq u\}.\]

\begin{proof}
Let $m:= \gnum {(\bot \to \bot)}$ and $n := \gnum \bot$.
In $\mc N$, we have \[\mc x_0^2 = \mc x_0^2\mc y_1^2 - (n+2) \mc x_0^2\mc y_0 \mc y_1+\mc x_0^2\mc y_0^2.\] So, $\mc x_0^2$ is in $\mc K$.

The elements of  $\mc x$-degree 2 and $\mc y$-degree 0 are co-initial in the non-standard elements of $\mc K$.
So, it suffices to show the theorem for the elements  of  $\mc x$-degree 2 and $\mc y$-degree 0.
Inspecting the proof of Lemma~\ref{goldensmurf}, we see that these can be written in the form
$k_2\mc x_0^2+k_1\mc x_0\mc x_1+k_0$, where the $k_i$ are standards integers.
We rewrite these elements to the form $\mc x_0^2(k_2+k_1\frac{\mc x_1}{\mc x_0})+k_0$.

We remind the reader that $0<\frac{\mc x_1}{\mc x_0} -\lambda_m < \frac{1}{2^{a-1}} $.
We note that, if $k_2+ k_1\lambda_m$ is positive, then $k_2+ k_1\lambda_m > \frac 1p$, for some standard $p$, since $\lambda_m$ is irrational. This clearly remains the
same if we replace $\lambda_m$ by $\frac{\mc x_1}{\mc x_0}$.

By Dirichlet's Approximation Theorem,\footnote{See, e.g., the Wikipedia Lemma on this theorem} for every $r>0$, we  can find $k_2$ and $k_1$ such that
 $0<k_2+ k_1\lambda_m < \frac 1r$. Again this is preserved 
 if we replace $\lambda_m$ by $\frac{\mc x_1}{\mc x_0}$.
\end{proof}

\noindent
We have the following immediate corollary.

\begin{cor}
The ordering of the elements of $\mc K$ modulo finite differences is dense.
\end{cor}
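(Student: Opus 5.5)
The plan is to read ``dense modulo finite differences'' in the usual way. Call $u,v\in\mc K$ \emph{finitely apart} iff $|u-v|$ is a standard number; these classes are ordered by the order of $\mc K$. We must show: whenever $u<v$ in $\mc K$ and $d:=v-u$ is non-standard, there is a $w\in\mc K$ with $u<w<v$ such that both $w-u$ and $v-w$ are non-standard. Since $\mc K_0$ is a ring, $d\in\mc K$, and any $w'\in\mc K$ gives $w:=u+w'\in\mc K$. So it suffices to find a non-standard $w'\in\mc K$ with $w'<d$ and $d-w'$ non-standard.

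First, apply Theorem~\ref{bythewaysmurf}(ii) to $d$. This gives a standard positive $p$ with $\frac{\mc x_0^2}{p}<d$.

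Second, apply Theorem~\ref{bythewaysmurf}(iii) with $r:=2p$. This gives $w'\in\mc K$ with $w'<\frac{\mc x_0^2}{2p}$. We need $w'$ to be non-standard. Looking at the proof of (iii), the witness is taken of the form $\mc x_0^2\bigl(k_2+k_1\frac{\mc x_1}{\mc x_0}\bigr)$ (with $k_0=0$), where the factor $k_2+k_1\frac{\mc x_1}{\mc x_0}$ is positive. By the irrationality argument there, this factor exceeds $\frac1s$ for some standard $s$. Hence $w'>\frac{\mc x_0^2}{s}$, and this is non-standard because $\mc x_0=\mf s_{m,a-1}$ is non-standard. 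This non-standardness of $w'$ is the only point needing a little care: (iii) as stated gives only some $v$ below $\frac{\mc x_0^2}{r}$, which could be a standard element. So we must use the specific witness from the proof, or equivalently the remark that the non-standard elements are co-initial in the set displayed after the theorem.

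Finally, $d-w'>\frac{\mc x_0^2}{p}-\frac{\mc x_0^2}{2p}=\frac{\mc x_0^2}{2p}$, which is non-standard, so $w'<d$ and $d-w'$ is non-standard. Setting $w:=u+w'$ gives $u<w<v$ with $w-u=w'$ and $v-w=d-w'$ both non-standard. This is exactly density of the ordering modulo finite differences.
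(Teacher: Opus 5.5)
Your proof is correct and follows the derivation the paper intends when it calls this an immediate corollary of Theorem~\ref{bythewaysmurf}. A non-standard difference lies above some $\frac{\mc x_0^2}{p}$ by (ii), and translating by a non-standard element of $\mc K$ below $\frac{\mc x_0^2}{2p}$ produces the intermediate class. You are also right to insist that (iii) be used with a non-standard witness, namely the Dirichlet element $k_2\mc x_0^2+k_1\mc x_0\mc x_1$ from its proof, since (iii) as literally stated is satisfied by $0$; this is the reading the paper signals with its ``in other words'' co-initiality reformulation.
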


\noindent Clearly, if $\mc K$ is countable, the ordering modulo finite differences has the order type of  $1+\mathbb Q$.
We note that the Bezboruah-Shepherdson model has, in the ordering modulo finite
differences, an immediate successor of the bottom. 

\begin{rem}\label{chatsmurf}
{\small I worked with ChatGPT on the proof Lemma~\ref{goldensmurf}. The program provided the recursion equations for the ${\mf s}_{n,k}$ and their characterisation
in terms of the eigenvalues. I am aware that this is standard material, but, since my knowledge of matrix theory was somewhat rusty, it greatly sped up the process.
The truly original contribution of ChatGPT is that it found a precursor of \textup{(\dag)} in the argument.

For the proof of Theorem~\ref{bythewaysmurf}, ChatGPT directed me to Dirichlet's Approximation Theorem.}
\end{rem}


\appendix

\section{Avoiding the Cut-interpretation of ${\sf S}^1_2$ in {\sf Q}}\label{gapsmurf}
   
    If we are just interested in ${\sf Q}\nvdash {\sf con}({\sf Q})$, we can side-step the need for Theorem~\ref{pudlak} as follows.
    By Nelson's work, we already know that {\sf Q} interprets ${\sf S}^1_2$. 
    Suppose ${\sf Q}\vdash {\sf con}({\sf Q})$. Then,
    ${\sf S}^1_2\vdash {\sf con}({\sf Q})$. So, {\sf Q} interprets  ${\sf S}^1_2+ {\sf con}({\sf Q})$. \emph{Quod non} by Theorem~\ref{gobu}.
  
    The disadvantage of this cut-evading argument is that it does not give us the \emph{essential} character of the Second Incompleteness Theorem.
   In other words, we do not get Theorem~\ref{pudlak2} (as far as we know). As a special case, the evasive argument does not
    help to prove closure of {\sf Q} under L\"ob's Rule. In contrast,  the Pudl\'ak argument using a cut does deliver the desired goods.

\section{Provability Logic for ${\sf PA}^-$} \label{loru}
Consider ${\sf PA}^-$. We use a sequence coding that is closed under concatenation of sequences or container strings. We can
obtain such a coding from Je{\v{r}}{\'{a}}bek's result in \cite{jera:sequ12} by shortening the length of the sequences
further or by using the Markov style container strings. We use a Hilbert-style system or sequent system.
Under these stipulations, we get the second L\"ob Condition:  $\opr(\phi \to \psi)$ implies $(\opr \phi \to \opr\psi)$.
Generally, we get Necessitation and, by the G\"odel-Pudl\'ak argument, we get L\"ob's rule. Moreover, we have modalised fixed points.
This gives us Cyclic Henkin Logic {\sf CHL}. This logic is studied in \cite{viss:cycl21}. A salient sub-system of this logic is Well-foundedness Logic
  {\sf WfL}. We give this logic below. The main idea here is to define a new operator $\opr^\grullet$ given 
  by the modalised fixed point equation $\opr^\grullet\phi \iff \opr(\phi\wedge \opr^\grullet \phi)$. The new operator
  will satify L\"ob's Logic.

Well-foundedness Logic {\sf WfL} has the modal language with two unary modalities $\opr$ and
$\opr^\grullet$. It is axiomatised by all propositional tautologies, closure under modus ponens, plus the following principles
and rules:
\begin{enumerate}[{\sf wfl}1.]
\item
If $\vdash \phi$, then $\vdash \opr\phi$.
\item
If $\vdash \opr \phi \to \phi$, then $\vdash \phi$.
\item
$\vdash \opr(\phi \to \psi) \to (\opr \phi \to \opr\psi)$.
\item
$\vdash \opr^\grullet\phi \iff \opr(\phi\wedge \opr^\grullet \phi)$.
\end{enumerate}

From these axioms and rules we can derive that $\opr^\grullet$ satisfies L\"ob's Logic.
Also, the  de Jongh-Sambin-Bernardi Theorem  on uniqueness of modalised fixed points follows. See \cite{viss:cycl21} for details.

We can easily see that {\sf WfL} is sound for interpretations in ${\sf PA}^-$ when we interpret $\opr$ as $\opr_{{\sf PA}^-}$ 
and $\opr^\grullet$ as described above.

We have two conjectures.

\begin{conj}\label{sterkesmurf}
{\small Suppose we use Markov-style container strings to represent proofs. Then, {\sf WfL} is the provability logic of ${\sf PA}^-$ for the $\opr,\opr^\grullet$-language.}
\end{conj}

\begin{conj}\label{spiersmurf}
{\small Suppose we use Markov-style container strings to represent proofs. Then, {\sf WfL} is closed under the admissible rule
$\opr p \to \opr q\;/\; p\to q$, in other words, for any arithmetical $\phi$ and $\psi$, if ${\sf PA}^- \vdash \opr_{{\sf PA}^-} \phi \to \opr_{{\sf PA}^-}\psi$,
then ${\sf PA}^- \vdash  \phi \to \psi$}
\end{conj}

We note that the truth of Conjecture~\ref{sterkesmurf} implies closure under $\opr p \to \opr q\;/ \; p\to q$ for provability logic of ${\sf PA}^-$, but does not imply
the truth of Conjecture~\ref{spiersmurf}.

A reasonable first question to look at would be the following.

\begin{ques}
{\small Suppose we use Markov-style container strings to represent proofs. We adapt the model $\mc K$ from Section~\ref{markovsmurf} to a model $\mc K^\ast$ by 
replacing $\bot$ by the G\"odel sentence $\gamma$ for $\opr_{{\sf PA}^-}$. Do we have an internal proof of $\bot$ in $\mc K^\ast$?}
\end{ques}

We note that in the study of this system both the G\"odel-Pudl\'ak argument and the methods initiated by Bezboruah {\&} Shepherdson could play a role.

\section{An Argument by Emil \jer}\label{emilsmurf}

Consider a commutative ring $\mc R$. Two elements $a$ and $b$ are called \emph{co-prime} or \emph{co-maximal} if
they satisfy the B\'ezout identity $xa+yb=1$, for some $x$ and $y$. We will use the following elementary facts that 
hold in any commutative ring.
\begin{enumerate}[a.]
\item
Suppose $j \neq i$,  $a= 1+iv$, $b= 1+jv$ and $(j-i) \mid v$.
Then $a$ and $b$ are co-prime.
\item
Suppose $a$ and $c$ are co-prime and $b$ and $c$ are co-prime. Then, $ab$ and $c$ are co-prime. 
\item
 Suppose $a$ and $b$ are co-prime and $a\mid c$ and $b\mid c$. Then, $ab\mid c$.
\end{enumerate}

\medskip
Consider any model $\mc M$ of ${\sf PA}^-$ that contains an element $c$ such that the sequence $(c^n)_{n\in \omega}$ is cofinal. 
The model constructed by Bezboruah {\&} Shepherdson satisfies this condition on $\mc M$.
Let $\mc J$ be the class of all
$a$ in $\mc M$ such that \[ (\dag) \;\;\;\forall v\, \exists u\,\forall i \bleq a\,\; 1+iv \mid u.\] Clearly, $\mc J$ is a cut. The cuts $I_1$ and $I_2$ of \jer's paper are 
sub-cuts of $\mc J$. We show that $\mc J$ is the standard cut.

Suppose $a > \omega$ is in $\mc J$. We apply (\dag) for $v:=1$. We get $u_0$ such that 
$\forall i \bleq a\, 1+i \mid u_0$.  We apply (\dag) for $v := u_0c$. This delivers $u$.
Suppose $i<j \leq a$. Then, $j-i\leq a$. So, $j-i\mid u_0c$. Applying (a) above, this tells us
that $1+iu_0c$ and $1+ju_0c$ are co-prime. 

We prove by induction that, for every standard
$n$, we have  \[(1+u_0c) \dots (1+nu_0c) \mid u.\] This certainly holds for $0$.
Suppose we have  $(1+u_0c) \dots (1+ku_0c) \mid u$. Since, each of $1+iu_0c$ for $i\leq k$ is
co-prime with  $(1+(k+1)u_0c)$, we find, by repeated application of (b), that $(1+u_0c) \dots (1+ku_0c)$
is co-prime with $(1+(k+1)u_0c)$. So, by application of (c), we find that 
$(1+u_0c) \dots (1+ku_0c)(1+(k+1)u_0c) \mid u$.

We may conclude that  
 $c^n < u$, for all $n\in \omega$. But this is impossible.
 \end{document}